\newtheorem{theorem}{Theorem}[section]
\newtheorem{lemma}[theorem]{Lemma}
\newtheorem{proposition}[theorem]{Proposition}
\newtheorem{corollary}[theorem]{Corollary}
\theoremstyle{definition}
\newtheorem{definition}[theorem]{Definition}
\newtheorem{example}[theorem]{Example}
\newtheorem{remark}[theorem]{Remark}
\newtheorem*{ack}{Acknowledgements}
\newcommand{\bS}{\overline{S}}
\newcommand{\de}{\partial}
\newcommand{\sL}{\mathcal{L}}
\newcommand{\sM}{\mathcal{M}}
\newcommand{\Hom}{\operatorname{Hom}}
\newcommand{\ten}{\otimes}
\newcommand{\op}{\operatorname}
\newcommand{\K}{\mathbb{K}\,}
\newcommand\C{\mathbb{C}}
\newcommand\Del{\operatorname{Del}}
\newcommand\Tot{\operatorname{Tot}}
\newcommand{\rh}{\rightarrow}
\newcommand{\solose}{\Rightarrow}
\newcommand{\MC}{\operatorname{MC}}
\newcommand{\Coder}{\operatorname{Coder}}
\newcommand{\id}{\operatorname{id}}
\newcommand{\bs}{\overline{S}}
\def\aaltbin#1#2{\ensuremath{\left(\kern-.35em\left(\genfrac{}{}{0pt}{}{#1}{#2}\right)\kern-.35em\right)} }
\begin{document}
	
	\title[Descent of Deligne-Getzler $\infty$-groupoids]{Descent of Deligne-Getzler $\infty$-groupoids}

\author{Ruggero Bandiera}
\address{Universit\`{a} degli studi di Roma "La Sapienza"}
\email{bandiera@mat.uniroma1.it}
	
\begin{abstract} We prove that Getzler's higher generalization of the Deligne groupoid commutes with totalization and homotopy limits.
\end{abstract}
\maketitle

\section*{Introduction}

It is nowadays well understood that every reasonable deformation problem over a field of characteristic zero is controlled by some differential graded (dg) Lie algebra, or more in general by an $L_\infty$ algebra. This important principle (which, to the best of our knowledge, first appeared in the paper \cite{Nij-Ric}) was sponsored by Deligne, Drinfeld and others over the eighties as a philosophy, and has been  recently made into a rigorous theorem by Lurie and Pridham \cite{lurie2,pridham}, working in the context of derived geometry (a first important step in this direction was made by Manetti \cite{EDF}). In this approach to deformation theory, a fundamental role is played by the Deligne groupoid of a dg Lie algebra \cite{GoMil1}.

In order to associate a controlling dg Lie algebra to a given deformation problem, there are several standard tools available. Among these, Hinich's theorem on descent of Deligne groupoids \cite{Hinichdescent} is especially useful. It applies when we want to deform some global structure on a space $X$, and we have a sheaf $\sL$ of \emph{non-negatively graded} dg Lie algebras over $X$, controlling the deformation problem locally: in this situation, Hinich's theorem tells us that a certain dg Lie algebra representing the \v{C}ech complex $C^*(X;\sL)$ controls the global defomation problem. For more details, and for concrete applications of this theorem to deformation theory, we refer to the original paper \cite{Hinichdescent}, as well as \cite{BM,FMMscdgla,FioMarIac,iacMan2}. 

Hinich's theorem fails when the involved dg Lie algebras are non-trivial in negative degrees. The reason is that in this case the Deligne groupoid is not the right object to consider anymore: in fact, it's the truncation of a more fundamental higher groupoid. For instance, when the dg Lie algebras are concentrated in degrees $\geq-1$, the right object to consider is the associated Deligne 2-groupoid \cite{GetzlerDel}. A proof of the corresponding theorem on descent of Deligne 2-groupoids, together with applications to deformation quantization, can be found in the references \cite{gerbes,yeku2}. 

In the general situation, the object to consider is a certain $\infty$-groupoid associated to the dg Lie algebra $L$. A first model for this $\infty$-groupoid was introduced by Sullivan \cite{sullivan} and studied in depth by Hinich \cite{Hinichdescent,hinichdgC}: it is the Kan complex $\MC_\infty(L):=\MC(\Omega^*(\Delta_\bullet;L))$ of Maurer-Cartan forms on the standard cosimplicial simplex $\Delta_\bullet$ with coefficients in  $L$. A second model, which is the subject of this paper, was introduced by Getzler \cite{GetzlerLie}: we shall denote it by  $\Del_\infty(L)$. While the two models are homotopy equivalent as Kan complexes, Getzler's model is smaller and contains more algebraic information, such as the Baker-Campbell-Hausdorff product on $L^0$ and the Gauge action on Maurer-Cartan elements. In particular, when $L$ is concentrated in non-negative degrees, $\Del_\infty(L)$ is naturally isomorphic to the nerve of the associated Deligne groupoid.  

The $\infty$-groupoid $\Del_\infty(L)$ can be defined as the Kan complex $\Del_\infty(L):=\MC(C^*(\Delta_\bullet;L))$ of (non-degenerate) Maurer-Cartan \emph{cochains} on $\Delta_\bullet$ with coefficients in $L$, where the cochain complex $C^*(\Delta_n;L)$ is equipped with the $L_\infty$ algebra structure induced via homotopy transfer along Dupont's contraction (see \cite{Dupont,GetzlerLie}). Similar constructions appeared in some recent literature \cite{BFMT1,BFMT2,Nicaud}, especially in connection with rational homotopy theory. The equivalence between the above definition of $\Del_\infty(L)$ and the original one from \cite{GetzlerLie} depends on a formal analog of Kuranishi's theorem, explaining how Maurer-Cartan sets behave under homotopy transfer: while this result, which will be proved is Theorem \ref{th:kuranishi}, is essentially due to Getzler, to our knowledge it hasn't been explicitly stated in the literature before. 

The aim of this paper is to prove a generalization of Hinich's descent theorem for the Deligne-Getzler $\infty$-groupoid: more precisely, we prove in Subsection \ref{subs:descent}, Theorems \ref{th:descent} and \ref{th:hldescent}, that the functor $\Del_\infty(-)$ from (complete) $L_\infty$ algebras to Kan complexes commutes (up to homotopy) with certain standard homotopy theoretic constructions, namely, totalization and homotopy limits. We should point out that parts of these results are already contained in Hinich's original paper (cf. \cite[Therem 4.1]{Hinichdescent}), under more restrictive assumptions: on the other hand, our method of proof is rather different and, we believe, more conceptually clear. The main point in our proof is the existence, shown in Theorem \ref{th:Delignevsmappingspaces}, of a natural weak equivalence $\Del_\infty(C^*(X;L))\xrightarrow{\sim}\underline{\mathbf{SSet}}(X,\Del_\infty(L))$ between the Deligne-Getzler $\infty$-groupoid of $C^*(X;L)$ (with the $L_\infty$ algebra structure induced via homotopy transfer along Dupont's contraction) and the simplicial mapping space $\underline{\mathbf{SSet}}(X,\Del_\infty(L))$: this allows to prove the compatibility between $\Del_\infty(-)$ and $\Tot(-)$ by working inductively on the usual tower of partial totalizations. Theorem \ref{th:Delignevsmappingspaces} is a slight generalization of results from \cite{BrSz} and \cite{BerglundLie}, where some additional restrictions on $X$ or $L$ are imposed: once again, our method of proof is rather different from the one in the above references, and depends on some standard results on Reedy model categories.

\begin{ack} Almost all the results proved here are already contained in the author's PhD Thesis \cite{tesi}: we are grateful to our PhD advisor, Marco Manetti, for proposing to us these problems and his unvaluable support. We are also grateful to Domenico Fiorenza, Florian Sch\"atz, Francesco Meazzini, Ping Xu, Mathieu Sti\'enon and Daniel Robert-Nicoud for many useful discussions. This paper was written while the author worked for a semester at Penn State University: we are grateful to the institution for the excellent working conditions.
\end{ack}

\section{Formal Kuranishi theorem} 
\subsection{Algebraic preliminaries}\label{subs: preliminaries} In this section we review some basic algebraic material, mainly with the aim to fix some terminology and notations. Throughout the paper, we work over a field $\K$ of characteristic zero. A graded space $V$ is graded over the integers, $V=\oplus_{k\in \mathbb{Z}}V^k$: given a homogeneous element $v\in V$, we denote by $|v|$ its degree. Given a graded space $V$, we denote by $V[1]$ its \emph{desuspension}, which is the graded space defined by $V[1]^k=V^{k+1}$. Differentials raise the degree by one. Given a category $\mathbf{C}$ and objects $X,Y$ in it, we shall denote by $\mathbf{C}(X,Y)$ the set of morphism between $X$ and $Y$ in $\mathbf{C}$.

Given a graded space $V$, we denote by $V^{\odot n}$ the $n$-th symmetric power of $V$, that is, the quotient of $V^{\otimes n}$ by the subspace spanned by the elements $v_1\otimes\cdots \otimes v_n - \varepsilon(\sigma) v_{\sigma(1)}\otimes \cdots\otimes v_{\sigma(n)}$, where $\sigma\in S_n$ is a permutation and $\varepsilon(\sigma)$ is the usual Koszul sign. We denote by $v_1\odot\cdots\odot v_n$ the image of $v_1\otimes\cdots\otimes v_n$ under the projection $V^{\otimes n}\to V^{\odot n}$. Finally, we denote by $\overline{S}(V)=\oplus_{n\geq1}V^{\odot n}$ the reduced symmetric coalgebra over $V$, with the usual unshuffle coproduct $\Delta(v_1\odot\cdots\odot v_n) = \sum_{i=1}^{n-1} \sum_{\sigma\in S(i,n-i)} \varepsilon(\sigma)(v_{\sigma(1)}\odot\cdots\odot v_{\sigma(i)})\otimes(v_{\sigma(i+1)}\odot\cdots\odot v_{\sigma(n)})$, where $S(i,n-i)\subset S_n$ is the set of $(i,n-i)$-unshuffles, that is, permutations $\sigma\in S_n$ such that $\sigma(j)<\sigma(j+1)$ for $j\neq i$. Given graded spaces $W, V$ and a map $F:\bS(W)\to\bS(V)$, we shall denote by $f_i:W^{\odot i}\to V$ the composition $W^{\odot i}\hookrightarrow\bS(W)\xrightarrow{F} \bS(V)\twoheadrightarrow V$, where the rightmost map is the canonical projection.

The coalgebra $\overline{S}(V)$ is coassociative, cocommutative and locally conilpotent, which means that $\overline{S}(V)=\bigcup_{n\geq1}\operatorname{Ker}(\Delta^{n})$, where $\Delta^{n}:\overline{S}(V)\to\overline{S}(V)^{\otimes n+1}$ is the iterated coproduct.  It is well known that $\overline{S}(V)$ is the cofree object over $V$ in the category of coassociative, cocommutative, locally conilpotent graded coalgebras, thus every coderivation $Q:\bs(V)\to \bs(V)$ and every morphism of graded coalgebras $F:\bs(W)\to\bs(V)$ are uniquely determined by their corestrictions $q:=\sum_{i\geq1}q_i:\bs(V)\to V$ and $f:=\sum_{i\geq1}f_i:\bs(W)\to V$: we shall call the maps $q_i$ and $f_i$ the Taylor coefficients of $Q$ and $F$ respectively. The coderivation $Q$ is determined by its Taylor coefficients $(q_1,\ldots,q_i,\ldots)$ via the formula 
\begin{equation}\label{codfromtaylor}
 Q(v_1\odot\cdots\odot v_n)=\sum_{i=1}^{n} \sum_{\sigma\in S(i,n-i)}\varepsilon(\sigma) q_i(v_{\sigma(1)}\odot\cdots\odot v_{\sigma(i)})\odot v_{\sigma(i+1)}\odot\cdots\odot v_{\sigma(n)},  \end{equation}
while the morphism $F$ is determined by its Taylor coefficients $(f_1,\ldots,f_i,\ldots)$ via 
\begin{equation}\label{morfromtaylor} 
 F(w_1\odot\cdots\odot w_n)=\sum_{k=1}^{n}\frac{1}{k!}\sum_{\stackrel{i_1,\ldots,i_k\ge1}{i_1+\cdots+i_k=n}} \sum_{\sigma\in S(i_1,\ldots,i_k)}\varepsilon(\sigma) f_{i_1}(w_{\sigma(1)}\odot\cdots)\odot\cdots\odot f_{i_k}( \cdots\odot w_{\sigma(n)}).  \end{equation}
In particular, corestriction induces an isomorphism of graded spaces $\Coder(\bs(V))\cong\Hom(\bs(V),V)$, where we denote by $\Hom(-,-)$ the internal Hom in the category of graded spaces, and an isomorphism of sets $\mathbf{GCC}(\bs(W),\bs(V))\cong\mathbf{G}(\bs(W),V)$, where $\mathbf{G}$ and $\mathbf{GCC}$ are the categories of graded spaces and graded cocommutative coassociative locally conilpotent coalgebras respectively.

\begin{definition} An \emph{$L_\infty$ algebra} $(V,Q)$ is a graded space $V$ together with a dg coalgebra structure $Q$ on $\overline{S}(V[1])$, that is, a family of degree one maps $q_i:V[1]^{\odot i}\to V[1]$, $i\geq1$, such that the corresponding coderivation $Q\in\Coder(\bs(V[1]))$ squares to zero. 
	
An \emph{$L_\infty$ morphism} $F:(W,R)\to(V,Q)$ between $L_\infty$ algebras is a morphism of dg coalgebras $F:\bs(W[1])\to\bs(V[1])$, that is, a family of degree zero maps $f_i:W[1]^{\odot i}\to V[1]$, $i\geq1$, such that the corresponding $F:\bs(W[1])\to\bs(V[1])$ commutes with $R$ and $Q$. 

Finally, a \emph{strict $L_\infty$ morphism} $f:(W, R)\to(V,Q)$ is a map of graded spaces $f:W\to V$ such that the morphism $F:\bs(W[1])\to\bs(V[1])$ defined by $f_1=f$, $f_i=0$ for $i\geq2$, is an $L_\infty$ morphism from $(W,R)$ to $(V,Q)$. Equivalently, $f$ is a strict $L_\infty$ morphism if the relation $f(r_i(w_1\odot\cdots\odot w_i))=q_i(f(w_1)\odot\cdots\odot f(w_i))$ holds for all $i\geq1$.

We shall denote by $\sL_\infty$ (resp.: $\mathbf{L}_\infty$) the category of $L_\infty$ algebras and (resp.: strict) $L_\infty$ morphism between them.\end{definition}

\begin{remark} To simplify the notations, and since we won't need to do many explicit computations, we shall denote by the same symbol an element $v\in V$ and its image in $V[1]$ under the shift map. When we do so, it should be clear from the context whether we are considering $v$ as an element in $V$ or $V[1]$. The distinction becomes relevant when applying Koszul rule for switching signs.
\end{remark}

\begin{remark}\label{tangent complex} It follows from the definitions that if $Q=(q_1,\ldots,q_i,\ldots)$ induces an $L_\infty$ algebra algebra structure on $V$, its linear part $q_1:V[1]\to V[1]$ squares to zero: we call the complex $(V[1],q_1)$ the \emph{tangent complex} of $(V,Q)$. Always from the definitions, given an $L_\infty$ morphism $F=(f_1,\ldots,f_i,\ldots):(W,R)\to(V,Q)$, its linear part $f_1$ is a morphism between the respective tangent complexes $f_1:(W[1],r_1)\to(V[1],q_1)$. An $L_\infty$ morphism $F$ is a \emph{weak equivalence} if its linear part $f_1$ is a quasi-isomorphism between the tangent complexes. 
\end{remark}

\begin{remark} There is another equivalent definition of $L_\infty$ algebras and $L_\infty$ morphisms found frequently in the  literature, see for instance the references \cite{GetzlerLie} and \cite{LadaStasheff}. The two are related by the d\'ecalage isomorphism $\mbox{d\'ec}:\Hom^k(W[1]^{\odot i}, V[1])\cong \Hom^{k+1-i}(W^{\wedge k}, V)$, where $W^{\wedge k}$ are the exterior powers of $W$: cf. \cite{FMcone} for our conventions on d\'ecalage. 
	
In particular, every dg Lie algebra $(L,d,[-,-])$ can be regarded as an $L_\infty$ algebras via the coderivation $q_1(l)= -dl$, $q_2(l_1,l_2)=(-1)^{|l_1|}[l_1,l_2]$, $q_i=0$ for $i\geq3$, and $f:L\to M$ is a strict $L_\infty$ morphism if and only if it is a morphism of dg Lie algebras. In other words, there is a canonical full embedding $\mathbf{DGLA}\subset \mathbf{L}_\infty$, where we denote by $\mathbf{DGLA}$ the category of dg Lie algebras.
\end{remark}

Throughout the paper, we shall work in a complete setting.

\newcommand{\dlim}{\underrightarrow{\operatorname{lim}}}
\newcommand{\ilim}{\underleftarrow{\operatorname{lim}}}

\begin{definition} A \emph{complete graded space} is a graded space $V$ equipped with a descending filtration $F^\bullet V$, 
	\[ V=F^1 V\supset\cdots\supset F^pV\supset\cdots  \]
such that $V$ is complete in the induced topology, that is, the natural $V\to\ilim\, V/F^pV$ is an isomorphism of graded spaces. Given complete graded spaces $(W,F^\bullet W)$ and $(V,F^\bullet V)$, a map of graded spaces $f:W\to V$ is \emph{continuous} if $f(F^pW)\subset F^pV$ for all $p\geq1$. We shall denote by $\widehat{\mathbf{G}}$ the category of complete graded spaces and continuous morphisms between them.

A \emph{complete dg space} $(V,F^\bullet V,d)$ is a complete graded space $(V,F^\bullet V)$ equipped with a continuous differential $d$. We shall denote by $\widehat{\mathbf{DG}}$ the category of complete dg spaces and continuous morphisms between them.

A \emph{complete $L_\infty$ algebra} is a complete graded space $(V,F^\bullet V)$ together with an $L_\infty$ algebra structure $Q$ on $V$ such that the Taylor coefficients $q_i$ are continuous for the induced topology, that is $q_i(F^{p_1}V[1]\odot\cdots \odot F^{p_i}V[1])\subset  F^{p_1+\cdots+p_i}V[1]$, for all $i,p_1,\ldots,p_i\geq 1$. 

Similarly, a \emph{continuous $L_\infty$ morphism} $F:(W,F^\bullet W,R)\to (V,F^\bullet V, Q)$ between complete $L_\infty$ algebras is an $L_\infty$ morphism $F:(W,R)\to(V,Q)$ such that its Taylor coefficient are continuous, that is, $f_i(F^{p_1}W[1]\odot\cdots \odot F^{p_i}W[1])\subset F^{p_1+\cdots+p_i}V[1]$, for all $i,p_1,\ldots,p_i\geq 1$. 

We shall denote by $\widehat{\sL_\infty}$ (resp.: $\widehat{\mathbf{L}_\infty}$) the category of $L_\infty$ algebras and continuous (resp.: strict) $L_\infty$ morphisms between them.
\end{definition}

\newcommand{\sR}{\mathcal{R}}

We close this section by recalling the definition of the Maurer-Cartan functor \[ \MC(-):\widehat{\sL_\infty}\to\mathbf{Set}, \]
where we denote by $\mathbf{Set}$ the category of sets. First, given a complete $L_\infty$ algebra $(V,F^\bullet V,Q)$, its \emph{curvature} is the map of sets $\sR_V:V^1\to V^2$ defined by 
\[ \sR_V(x) = \sum_{i\geq1} \frac{1}{i!}q_i(\overbrace{ x\odot\cdots\odot  x}^i)\qquad\mbox{for all $x\in V^1$}. \]
The above infinite sum (and the following ones) is convergent by completeness of $(V,F^\bullet V,Q)$. 

Given a continuous $L_\infty$ morphism $F:(W,F^\bullet W,R)\to(V,F^\bullet V,Q)$ of complete $L_\infty$ algebras, the associated \emph{push-forward} is the map of sets $F_\ast:W^1\to V^1$ defined by 
\[ F_\ast(x) = \sum_{i\geq1} \frac{1}{i!}f_i(\overbrace{x\odot\cdots\odot x}^i)\qquad\mbox{for all $x\in W^1$}. \] 

The curvature and the push-forward are related by the following identity, which can be shown via a direct computation (we omit the details, as this is well known)
\begin{equation}\label{Bianchi} \sR_V( F_\ast (x)) = \sum_{i\ge0}\frac{1}{i!}\, f_{i+1}(\sR_W(x)\odot\overbrace{x\odot\cdots\odot x}^i),\qquad\mbox{for all $x\in W^1$}. \end{equation}
\begin{definition} Given a complete $L_\infty$ algebra $(V,F^\bullet V,Q)$, its \emph{Maurer-Cartan set} is the set
	\[ \MC(V):=\{ x\in V^1\,\, \operatorname{s.t.}\,\, \sR_V(x)=0\}.\]
Given a continuous $L_\infty$ morphism $F:(W,F^\bullet W,R)\to(V,F^\bullet V,Q)$ of complete $L_\infty$ algebras, the associated morphism of Maurer-Cartan sets is the restriction of the push-forward $\MC(F):= F_{\ast|\MC(W)}:\MC(W)\to\MC(V)$. The fact that this takes values in $\MC(V)$ is a consequence of the above identity \eqref{Bianchi}.

\end{definition}

\subsection{Homotopy transfer and formal Kuranishi theorem} In this section we review the homotopy transfer theorem for (complete) $L_\infty$ algebras, together with a formal analog of Kuranishi's theorem, saying how Maurer-Cartan sets behave under homotopy transfer. 

\begin{definition} A \emph{complete contraction} \[\xymatrix{ W\ar@<2pt>[r]^f& V\ar@<2pt>[l]^g\ar@(rd,ru)[]_K }\] is the data of a complete dg space $(V,F^\bullet V,d_V)$ and a dg space $(W,d_W)$, together with dg morphisms $f:(W,d_W)\to (V,d_V)$, $g:(V,d_V)\to(W,d_W)$ and a contracting (degree minus one) homotopy $K:V\to V$, such that 
	\begin{itemize} \item $g$ is a left inverse to $f$, that is, $gf=\id_W$;
		\item $K$ is a homotopy between $fg$ and $\id_V$, that is, $Kd_V+d_VK=fg-\id_V$;
		\item $K$ satisfies the side conditions $Kf = K^2 = gK= 0$;
		\item $K$ and $fg$ are continuous with respect to the filtration $F^\bullet V$ on $V$.
	\end{itemize}
We shall always equip $W$ with the induced filtration $F^pW = f^{-1}(F^pV)$: the last condition ensures that $(W,d_W)$ is a complete dg space with respect to this filtration, and $f,g$ are continuous morphism.

Given two complete contractions $\xymatrix{ W\ar@<2pt>[r]^f& V\ar@<2pt>[l]^g\ar@(rd,ru)[]_K }$ and $\xymatrix{ {W'}\ar@<2pt>[r]^{f'}& V'\ar@<2pt>[l]^{g'}\ar@(rd,ru)[]_{K'}}
$ as above, a morphism between them is a continuous morphism of dg spaces $\phi:(V,d_V)\to (V',d_V')$ commuting with the homotopies $K$ and $K'$, that is, $K'\phi=\phi K$.

With these definitions, complete contractions form a category, which we denote by $\widehat{\mathbf{Ctr}}$. 
\end{definition}

\begin{remark}\label{rem:pr} There is a pair of functors $\operatorname{pr}_i:\widehat{\mathbf{Ctr}}\to\widehat{\mathbf{DG}}$, $i=1,2$. The functor $\operatorname{pr}_1$ (resp.: $\operatorname{pr}_2$) sends a complete contraction $\xymatrix{ W\ar@<2pt>[r]^f& V\ar@<2pt>[l]^g\ar@(rd,ru)[]_K }$ to the complete dg space $(V,F^\bullet V,d_V)$ (resp.: $(W,F^\bullet W,d_W)$) and a morphism $\phi: \left(\xymatrix{ W\ar@<2pt>[r]^f& V\ar@<2pt>[l]^g\ar@(rd,ru)[]_K }\right)\,\to\,\left(\xymatrix{ {W'}\ar@<2pt>[r]^{f'}& V'\ar@<2pt>[l]^{g'}\ar@(rd,ru)[]_{K'}}\right)$ to the continuous dg morphism $\phi:(V,d_V)\to (V',d_{V'})$ (resp.: $g'\phi f:(W,d_W)\to (W',d_{W'})$). The fact that $\operatorname{pr}_2$ is a functor is not obvious, but follows easily from the definitions. It is easy to check that the category $\widehat{\mathbf{Ctr}}$ is complete, and that the functors $\operatorname{pr}_1, \operatorname{pr}_2$ commute with small limits.
\end{remark}

The \emph{homotopy transfer theorem} asserts that (complete) $L_\infty$ algebra structures can be transferred along (complete) contractions.

\begin{theorem}\label{th:transfer} Given a complete contraction $\xymatrix{ W[1]\ar@<2pt>[r]^{f_1}& V[1]\ar@<2pt>[l]^{g_1}\ar@(rd,ru)[]_K }$ and a complete $L_\infty$ algebra structure $Q$ on $(V,F^\bullet V)$ with linear part $q_1=d_{V[1]}$, there is an induced complete $L_\infty$ algebra structure $R$ on $(W,F^\bullet W)$ with linear part $r_1=d_{W[1]}$, together with continuous $L_\infty$ morphisms $F:(W,R)\to(V,Q)$, $G:(V,Q)\to (W,R)$ with linear parts $f_1$, $g_1$ respectively. Denoting by $F^k_i$ the composition $W[1]^{\odot i}\hookrightarrow\bs(W[1])\xrightarrow{F}\bs(V[1])\twoheadrightarrow V[1]^{\odot k}$, $F$ and $R$ are detemined recursively by (notice that by formula \eqref{morfromtaylor} $F^k_i$ only depends on $f_1,\ldots, f_{i-k+1}$)
\[ f_i = \sum_{k=2}^i Kq_k F^k_i \qquad \mbox{for $i\geq2$}, \]	
\[ r_i = \sum_{k=2}^i g_1q_k F^k_i \qquad \mbox{for $i\geq2$}. \]	
It is possible to establish recursive formulas for $G$ as well, but these are a bit more complicated. We denote by $K_i:V[1]^{\odot i}\to V[1]^{\odot i}$ the degree minus one map defined by 
\[ K_i(v_1\odot\cdots\odot v_i) = \frac{1}{i!} \sum_{\sigma\in S_i}\sum_{j=1}^{i}\pm f_1g_1(v_{\sigma(1)})\odot\cdots\odot f_1g_1(v_{\sigma(j-1)})\odot K(v_{\sigma(j)})\odot v_{\sigma(j+1)}\odot\cdots\odot v_{\sigma(i)}, \]
where $\pm$ is the appropriate Koszul sign (taking into account that $|K|=-1$).  Denoting by $Q^k_i$ the composition $V[1]^{\odot i}\hookrightarrow\bs(V[1])\xrightarrow{Q}\bs(V[1])\twoheadrightarrow V[1]^{\odot k}$, the $L_\infty$ morphism $G$ is determined recursively by
\[ g_i = \sum_{k=1}^{i-1} g_k Q^k_i K_i\qquad\mbox{for $i\geq2$}.  \]
\end{theorem}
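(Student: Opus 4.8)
The plan is to prove the homotopy transfer theorem by exploiting the cofreeness of the reduced symmetric coalgebra, which reduces the entire statement to constructing the corestrictions and verifying the defining relations on corestrictions only. Concretely, I would first declare the Taylor coefficients $f_i$, $r_i$ (for $i \geq 2$) by the stated recursive formulas, noting as the parenthetical remark indicates that $F^k_i$ depends only on $f_1, \ldots, f_{i-k+1}$ by formula \eqref{morfromtaylor}, so the recursions are well-posed (each $f_i$ is defined in terms of lower-order data). These assemble via \eqref{codfromtaylor} and \eqref{morfromtaylor} into a coderivation $R$ and a coalgebra morphism $F$ on $\bs(W[1])$ and $\bs(V[1])$ respectively, automatically of the correct degree. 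Continuity is checked on Taylor coefficients: since $q_k$, $K$, and $g_1$ are continuous and $F^k_i$ is built from continuous pieces, an induction on $i$ shows each $f_i$ and $r_i$ satisfies the required filtration estimate, so that $F$ and $R$ land in $\widehat{\sL_\infty}$.

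The heart of the argument is to verify simultaneously that $R^2 = 0$ (so $R$ is genuinely an $L_\infty$ structure) and that $FR = QF$ (so $F$ is an $L_\infty$ morphism). Here the key observation is that the two recursions are designed precisely so that these identities hold. I would phrase the verification as a single statement about the composite coderivation-type map and prove it by induction on the arity $i$ of the corestriction, i.e., by checking the corestriction of $FR - QF$ vanishes on $W[1]^{\odot i}$ for each $i$. The defining recursion $f_i = \sum_{k=2}^i K q_k F^k_i$ is exactly the content of applying the homotopy $K$ to the obstruction, and here the side conditions $Kf_1 = K^2 = g_1 K = 0$ together with the homotopy relation $K d_V + d_V K = f_1 g_1 - \id$ are what make the telescoping work: multiplying the obstruction identity by $d_V$ on the left and using $d_V K = f_1 g_1 - \id - K d_V$ converts the $K q_k F^k_i$ term into the desired combination. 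The recursion for $r_i = \sum_{k=2}^i g_1 q_k F^k_i$ is then the $\id - f_1 g_1$ complementary piece projected back to $W$ via $g_1$, and $g_1 f_1 = \id_W$ closes the loop.

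For the morphism $G$, the plan is similar but runs through the more intricate map $K^\Sigma_i$, which is the natural symmetrized lift of $K$ to $V[1]^{\odot i}$ using the already-constructed projection $f_1 g_1$. I would verify that $G$ as defined by $g_i = \sum_{k=1}^{i-1} g_k Q^k_i K^\Sigma_i$ has linear part $g_1$ and satisfies $GQ = RG$, again by induction on arity, using the previously established properties of $F$ and $R$ to control the lower-order terms $g_k$ appearing on the right. The role of $K^\Sigma_i$ is to provide a contracting homotopy at the level of symmetric powers compatible with the coproduct, and the side conditions on $K$ propagate to analogous vanishing properties of $K^\Sigma_i$ against $f_1 g_1$-images.

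The main obstacle I anticipate is the bookkeeping in the inductive verification of $FR = QF$ and $GQ = RG$: one must carefully track how $F^k_i$ (equivalently the coproduct structure encoded in \eqref{morfromtaylor}) interacts with the coderivation $R$, since applying $R$ to a product $w_1 \odot \cdots \odot w_i$ produces both terms where $r_j$ hits a subset and terms carried along unchanged, and matching these against $Q$ applied to $F(w_1 \odot \cdots \odot w_i)$ requires the full cocommutative-coassociative combinatorics of unshuffles together with the Koszul signs. The clean way to organize this, which I would adopt, is to work not with the explicit sums but with the coalgebra-level identities: since $F$ is a coalgebra morphism and $R$, $Q$ are coderivations, both $FR$ and $QF$ are $(R,Q)$-coderivations along $F$, hence uniquely determined by their corestrictions, so it suffices to match corestrictions — and the corestriction identity is exactly what the recursions $f_i = \sum K q_k F^k_i$ encode once one applies $d_V$ and invokes the homotopy relation. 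This reduces the seemingly heavy computation to a transparent algebraic manipulation, at the cost of one careful lemma identifying $FR - QF$ as such a coderivation.
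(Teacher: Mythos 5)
Your outline is a correct plan for proving the transfer theorem directly, but you should know that the paper does not actually carry out such an argument: its proof consists of citing the arXiv version of \cite{FMcone} for the existence of $R$ and $F$, citing \cite{BerglundHPT} for the recursion defining $G$, and proving only the continuity statements --- by exactly the induction on Taylor coefficients that you describe. Your reduction of $QF=FR$ to an identity of corestrictions (both composites are coderivations along the coalgebra morphism $F$, hence determined by their corestrictions, by the cofreeness of $\bs(V[1])$) is the standard argument and is essentially what the cited reference does: one writes the corestriction of $QF$ on $W[1]^{\odot i}$ as $q_1f_i+\sum_{k\geq2}q_kF^k_i$, substitutes $f_i=K\bigl(\sum_{k\geq2}q_kF^k_i\bigr)$, applies the homotopy relation to $q_1K$, and absorbs the leftover $Kq_1\sum_{k\geq 2}q_kF^k_i$ term using $Q^2=0$ and the inductive hypothesis in lower arity; the identity $R^2=0$ then follows from $FR^2=QFR=Q^2F=0$ together with the observation that a coderivation killed by a coalgebra morphism with injective linear part must vanish (induction on arity, using $g_1f_1=\id$). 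What your approach buys is a self-contained proof; what it costs shows up in the $G$ part, which is the thinnest point of your sketch. The recursion $g_i=\sum_{k<i}g_kQ^k_iK^\Sigma_i$ is most cleanly obtained not by a bare induction on the relation $GQ=RG$ but by running the homological perturbation lemma on the tensor-trick contraction of $\bs(V[1])$ onto $\bs(W[1])$ with homotopy $K^\Sigma$, perturbing the differential by $Q-Q^1_1$ (this is how \cite{BerglundHPT} proceeds); either way you must actually verify that $K^\Sigma$ satisfies the homotopy identity and the side conditions $(K^\Sigma)^2=0$, $K^\Sigma\circ\bs(f_1)=0$, $\bs(g_1)\circ K^\Sigma=0$ on symmetric powers --- this is precisely where the $\tfrac{1}{i!}$-symmetrization in the definition of $K^\Sigma_i$ is needed, and it should be recorded as a separate lemma rather than asserted as propagation of the side conditions on $K$.
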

\begin{proof} In the non-complete setting, the first part of the theorem is completely standard: for a proof we refer to the arXiv version of \cite{FMcone}. The second part of the theorem (the one concerning the $L_\infty$ morphism $G$) is less standard: for a proof we refer to \cite{BerglundHPT}.
	
The fact that $R$, $F$ and $G$ are continuous follows by straightforward inductions, using the recursive formulas.	
\end{proof}
We shall need the following lemmas, where the functors $\operatorname{pr}_i:\widehat{\mathbf{Ctr}}\to\widehat{\mathbf{DG}}$, $i=1,2$, were introduced in Remark \ref{rem:pr}. The proofs proceed by easy inductions, using the recursive formulas from Theorem \ref{th:transfer}: details may be found in \cite[Lemma 2.2.3, Lemma 2.2.7]{tesi}.
\begin{lemma}\label{lem:transfer} Given a morphism of complete contractions \[\phi: \left(\xymatrix{ W[1]\ar@<2pt>[r]^f& V[1]\ar@<2pt>[l]^g\ar@(rd,ru)[]_K }\right)\,\to\,\left(\xymatrix{ {W'[1]}\ar@<2pt>[r]^{f'}& V'[1]\ar@<2pt>[l]^{g'}\ar@(rd,ru)[]_{K'}}\right),\] 
together with complete $L_\infty$ algebra structures $Q$ and $Q'$ on $V,V'$ with linear parts $q_1=d_{V[1]}$, $q'_1=d_{V'[1]}$ respectively, if $\operatorname{pr}_1(\phi)$ is a strict $L_\infty$ morphism $\operatorname{pr}_1(\phi):(V,Q)\to(V',Q')$, then also $\operatorname{pr}_2(\phi):(W,R)\to (W',R')$ is a strict $L_\infty$ morphism between the transferred $L_\infty$ structures, and furthermore the following diagrams are commutative, where the $L_\infty$ morphisms $F,F',G,G'$ are defined as in the previous theorem,
\[\xymatrix{(V,Q)\ar[r]^{\operatorname{pr_1}(\phi)} & (V',Q')\\ (W,R)\ar[r]^{\operatorname{pr}_2(\phi)}\ar[u]_F & (W',R')\ar[u]_{F'}}  \qquad \xymatrix{(V,Q)\ar[r]^{\operatorname{pr_1}(\phi)}\ar[d]_G & (V',Q')\ar[d]_{G'}\\ (W,R)\ar[r]^{\operatorname{pr}_2(\phi)} & (W',R')} \]
\end{lemma}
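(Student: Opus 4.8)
The plan is to prove Lemma \ref{lem:transfer} by a single induction on the arity $i$, running in parallel the strictness of $\operatorname{pr}_2(\phi)$ and the commutativity of the two diagrams. Throughout I write $\psi := \operatorname{pr}_1(\phi):(V,d_V)\to(V',d_{V'})$ for the continuous dg morphism underlying $\phi$, and $\bar\psi := \operatorname{pr}_2(\phi) = g'\psi f:(W,d_W)\to(W',d_{W'})$ for its transfer; by hypothesis $\psi$ is a \emph{strict} $L_\infty$ morphism, so $\psi q_k = q'_k \psi^{\odot k}$ for all $k\geq 1$, and by the definition of a morphism in $\widehat{\mathbf{Ctr}}$ it intertwines the homotopies, $K'\psi = \psi K$. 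The base case $i=1$ is immediate: strictness of $\bar\psi$ in arity one is the statement $\bar\psi d_W = d_{W'}\bar\psi$, which holds since $\bar\psi$ is a dg morphism (Remark \ref{rem:pr}), and the diagrams commute in their linear parts because $F,F',G,G'$ have linear parts $f_1,g_1,f'_1,g'_1$ and $\phi$ is by construction a morphism of contractions, so $\psi f_1 = f'_1 \bar\psi$ and $\bar\psi g_1 = g'_1 \psi$.

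For the inductive step I would feed the strictness relation $\psi q_k = q'_k\psi^{\odot k}$ and the intertwining $K'\psi=\psi K$ through the recursive formulas of Theorem \ref{th:transfer}. First I establish the commutativity of the left square, $\psi F = F'\bar\psi$ on corestrictions, by showing $\psi f_i = f'_i \bar\psi^{\odot i}$ inductively. Using $f_i = \sum_{k=2}^i K q_k F^k_i$ one computes
\[ \psi f_i = \sum_{k=2}^i \psi K\, q_k F^k_i = \sum_{k=2}^i K'\psi\, q_k F^k_i = \sum_{k=2}^i K'\, q'_k\,\psi^{\odot k} F^k_i, \]
and since $F^k_i$ depends polynomially on $f_1,\dots,f_{i-k+1}$ through formula \eqref{morfromtaylor}, the inductive hypothesis $\psi f_j = f'_j\bar\psi^{\odot j}$ for $j<i$ gives $\psi^{\odot k} F^k_i = (F')^k_i \bar\psi^{\odot i}$, whence $\psi f_i = \big(\sum_{k=2}^i K' q'_k (F')^k_i\big)\bar\psi^{\odot i} = f'_i\bar\psi^{\odot i}$. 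The strictness of $\bar\psi$ in arity $i$ then follows from the analogous computation with $r_i = \sum_{k=2}^i g_1 q_k F^k_i$ combined with the intertwining $\bar\psi g_1 = g'_1\psi$ coming from the morphism of contractions; this yields $\bar\psi r_i = r'_i\bar\psi^{\odot i}$, which is exactly the strictness relation recorded in the Definition of a strict $L_\infty$ morphism.

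Finally, the right square is handled the same way using the recursion $g_i = \sum_{k=1}^{i-1} g_k Q^k_i K^\Sigma_i$ for $G$. The point is to verify that the operator $K^\Sigma_i$ is natural with respect to $\psi$: since $\psi$ commutes with $K$ and with $f_1 g_1$ (the latter because $\psi f_1 g_1 = f'_1\bar\psi g_1 = f'_1 g'_1\psi$ by the already-established linear compatibilities), the explicit formula for $K^\Sigma_i$ gives $\psi^{\odot i} K^\Sigma_i = (K')^\Sigma_i \psi^{\odot i}$; combining this with the strictness $\psi^{\odot k}Q^k_i = (Q')^k_i\psi^{\odot i}$ and the inductive hypothesis $\bar\psi g_k = g'_k\psi^{\odot k}$ for $k<i$ closes the induction. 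The main obstacle I anticipate is bookkeeping the compatibility of $K^\Sigma_i$ with $\psi$: one must check that the Koszul signs and the averaging over $S_i$ behave correctly, and that $\psi$ commutes not just with $K$ but with the mixed idempotent $f_1 g_1$ appearing in each summand. Once that naturality is in hand the remaining manipulations are formal, so as the statement indicates these routine verifications can safely be left to the reader.
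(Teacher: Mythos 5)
Your argument is correct and is exactly the induction on arity via the recursive formulas of Theorem \ref{th:transfer} that the paper itself indicates (the paper leaves these details to the reader, citing the thesis). The only point you gloss slightly is that $\psi f_1=f'_1\bar\psi$ and $\bar\psi g_1=g'_1\psi$ are not literally part of the definition of a morphism of contractions but follow from $K'\psi=\psi K$, the homotopy identity and $gf=\id$, via $\psi f_1g_1=f'_1g'_1\psi$; once that is noted, everything checks out.
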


\begin{lemma}\label{lem:transferalongstrictmorphisms} Let $g_1:(V,Q)\rh (W,R)$ be a strict morphism of complete $L_\infty$ algebras, fitting into a contraction $\xymatrix{ W[1]\ar@<2pt>[r]^{f_1}& V[1]\ar@<2pt>[l]^{g_1}\ar@(rd,ru)[]_K }$ from $(V,q_1)$ to $(W,r_1)$. The transferred $L_\infty[1]$ algebra structure on $W$ is again $(W,R)$, moreover the $L_\infty$ morphism $G:(V,Q)\rh (W,R)$ from Theorem \ref{th:transfer} is $G=g_1$.\end{lemma}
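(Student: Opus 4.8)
The plan is to deduce both assertions from Lemma \ref{lem:transfer}, by recognizing $g_1$ as a morphism of complete contractions whose target is the trivial contraction on $W$. As target I would take the identity contraction $W[1]\rightleftarrows W[1]$ with $f'=g'=\id_{W[1]}$ and $K'=0$, equipped with the given $L_\infty$ structure $R$ on its ``big'' space $V':=W$. This is visibly an object of $\widehat{\mathbf{Ctr}}$, since every side condition holds trivially once $K'=0$, and applying Theorem \ref{th:transfer} to it returns $R$ itself together with $F'=G'=\id$: because $K'=0$ the recursion gives $f'_i=0$ for $i\geq 2$, so $F'=\id$ and hence $r'_i = g'r_i(F')^{i}_{i} = r_i$, while $(K')^{\Sigma}_i=0$ forces $g'_i=0$ for $i\geq 2$.

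Next I would verify that $\phi:=g_1$ is a morphism in $\widehat{\mathbf{Ctr}}$ from the given contraction (whose ``big'' space is $V$) to this identity contraction. Only two conditions are required. First, $g_1\colon (V[1],q_1)\to(W[1],r_1)$ must be a continuous dg morphism, which is exactly the linear instance $g_1 q_1 = r_1 g_1$ of the strictness relation, together with the continuity of $g_1$ already built into the contraction data. Second, $\phi$ must commute with the homotopies, $K'\phi=\phi K$, which here reads $0 = g_1 K$ and is precisely one of the side conditions of the given contraction. Since $\operatorname{pr}_1(\phi)=g_1\colon (V,Q)\to(W,R)$ is strict by hypothesis, Lemma \ref{lem:transfer} applies.

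Finally I would read off the two conclusions. The induced morphism $\operatorname{pr}_2(\phi)=g'\phi f_1 = g_1 f_1 = \id_W$ is, by Lemma \ref{lem:transfer}, a strict $L_\infty$ morphism from the structure $\widetilde R$ transferred along the given contraction to the structure transferred along the identity contraction, namely $R$; as its underlying linear map is $\id_W$, strictness forces $\widetilde R = R$, which is the first assertion. For the second, the right-hand commuting square of Lemma \ref{lem:transfer}, relating the transfer morphisms $G$ and $G'=\id$, reads $G'\circ\operatorname{pr}_1(\phi)=\operatorname{pr}_2(\phi)\circ G$, i.e. $g_1 = G$, so $G$ is the strict morphism with linear part $g_1$. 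I do not anticipate a genuine obstacle: the whole argument is formal, and a direct verification through the recursive formulas (which would have to contend with the terms $K^{\Sigma}_i$) is entirely bypassed by Lemma \ref{lem:transfer}. The only point demanding care is the bookkeeping of which contraction plays the role of source and which of target, so that the roles of $\operatorname{pr}_1$, $\operatorname{pr}_2$ and of the strictness hypothesis line up correctly.
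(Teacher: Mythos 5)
Your argument is correct, and it reaches the conclusion by a genuinely different route from the paper. The paper proves this lemma (together with Lemma \ref{lem:transfer}) by a direct induction on the recursive formulas of Theorem \ref{th:transfer}: one uses strictness in the form $g_1q_k=r_k g_1^{\odot k}$ together with the side condition $g_1K=0$ to kill all higher Taylor coefficients in the recursions for $\widetilde r_i$ and $g_i$. Your proof instead treats Lemma \ref{lem:transfer} as a black box and feeds it the morphism of contractions $g_1$ into the trivial contraction $(f'=g'=\id_{W[1]},\,K'=0)$ carrying the structure $R$; the computation that this trivial contraction transfers $R$ to $R$ with $F'=G'=\id$ is immediate from $K'=0$, the verification that $g_1$ is a morphism in $\widehat{\mathbf{Ctr}}$ reduces to the side condition $g_1K=0$, and the two conclusions then fall out of the commuting squares exactly as you describe ($\operatorname{pr}_2(\phi)=g_1f_1=\id_W$ being strict forces $\widetilde R=R$, and the $G$-square gives $G=g_1$). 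What your approach buys is that all bookkeeping with the symmetrized homotopies $K^\Sigma_i$ in the recursion for $G$ is bypassed; what it costs is nothing beyond what the paper already assumes, since Lemma \ref{lem:transfer} precedes the statement and no circularity is introduced. The only point worth making explicit is that the trivial contraction must be taken with the filtration making $(W,R)$ a complete $L_\infty$ algebra, so that continuity of $\phi=g_1$ is exactly the continuity built into the hypothesis that $g_1$ is a strict morphism in $\widehat{\mathbf{L}_\infty}$; with that said, the argument is complete.
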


\begin{remark}\label{rem:transfer as a functor} By the above Lemma \ref{lem:transfer}, homotopy transfer can be regarded as a functor \[ \widehat{\mathbf{L}_\infty}\times_{\widehat{\mathbf{DG}}}\widehat{\mathbf{Ctr}}\to\widehat{\mathbf{L}_\infty}\]
where the fiber product $\widehat{\mathbf{L}_\infty}\times_{\widehat{\mathbf{DG}}}\widehat{\mathbf{Ctr}}$ is taken along the functor $\operatorname{pr}_1$ from Remark \ref{rem:pr} and the tangent complex functor from Remark \ref{tangent complex}. It is easy to check that when we regard homotopy transfer as a functor in the above sense,  it commutes with small limits.
\end{remark}

The main result of this section is the following formal analog of Kuranishi's theorem, which is essentially due to Getzler \cite{GetzlerLie, Getzlerpert}: in particular, our proof follows closely the proofs of Lemma 4.6 and Lemma 5.3 in loc. cit..

\begin{theorem}\label{th:kuranishi} In the same hypotheses as in Theorem \ref{th:transfer}, the correspondence
\[\rho:\MC(V)\rh\MC(W)\times K(V^1):x\rh(\MC(G)(x),K(x))\]
is bijective. The inverse $\rho^{-1}$ admits the following recursive construction: given $y\in\MC(W)$ and $K(v)\in K(V^1)$, we define a succession of elements $x_n\in V^1$, $n\geq 0$, by $x_{0}=0$ and 
\begin{equation}\label{eq:getzlerrecursion} x_{n+1}=f_1(y)-q_1K(v)+\sum_{i\geq2}\frac{1}{i!}\left(Kq_i-f_1g_i\right)\left(x_n^{\odot i}\right).    \end{equation}
This succession converges (with respect to the complete topology induced by the filtration on $V$) to a well defined $x\in V^1$, and we have  $\rho^{-1}(y,K(v))=x$. Finally, $\rho^{-1}(-,0)$ coincides with $\MC(F):\MC(W)\rh\MC(V)$, which induces a bijective correspondence between the sets $\MC(W)$ and $\op{Ker}\,K\bigcap\MC(V)$, whose inverse is the restriction of $g_1$. \end{theorem}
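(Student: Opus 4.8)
The plan is to construct the inverse $\rho^{-1}$ directly as the map $\sigma$ sending $(y,K(v))$ to the limit $x$ of the recursion \eqref{eq:getzlerrecursion}, and then to verify that $\sigma$ and $\rho$ are mutually inverse. First I would establish that the recursion converges and that its limit is the \emph{unique} fixed point of the map $\Phi(x)=f_1(y)-q_1K(v)+\sum_{i\geq2}\frac1{i!}(Kq_i-f_1g_i)(x^{\odot i})$. Both facts follow from completeness: writing $x^{\odot i}-x'^{\odot i}$ as a sum of terms each containing one factor $x_n-x_{n-1}$ (resp.\ $x-x'$) and $i-1$ factors in $F^1V$, the continuity of $q_i,g_i$ (for $i\geq2$) together with $K,f_1$ shows that $\Phi$ strictly increases the filtration of differences; hence $x_{n+1}-x_n\in F^{n+1}V$, and two fixed points must agree modulo every $F^pV$, i.e.\ coincide since $\bigcap_pF^pV=0$.

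Next I would record the characterizing properties of a fixed point $x=\sigma(y,K(v))$. Applying $K$ to the fixed point equation and using the side conditions $Kf_1=K^2=0$ leaves only $K(x)=-Kq_1K(v)$; since $Kq_1K=(f_1g_1-\id-q_1K)K=-K$ (using $g_1K=K^2=0$), this gives $K(x)=K(v)$, the second component. Substituting $\sum_{i\geq2}\frac1{i!}q_i(x^{\odot i})=\sR_V(x)-q_1(x)$ and $\sum_{i\geq2}\frac1{i!}g_i(x^{\odot i})=G_\ast(x)-g_1(x)$ into the fixed point equation, then using the homotopy relation $Kq_1+q_1K=f_1g_1-\id$ and $K(x)=K(v)$, a short manipulation reduces the equation to the single identity $K\sR_V(x)=f_1(G_\ast(x)-y)$. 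Applying $g_1$ and using $g_1K=0$, $g_1f_1=\id$ yields $G_\ast(x)=y$, the first component, and feeding this back gives $K\sR_V(x)=0$.

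The main obstacle is to upgrade $K\sR_V(x)=0$ to the genuine Maurer--Cartan condition $\sR_V(x)=0$; this is the heart of the argument and is where both Bianchi-type identities and completeness enter. Writing $c=\sR_V(x)$, the homotopy relation together with $Kc=0$ gives $c=f_1g_1c-Kq_1c$. I would then eliminate the linear operators $q_1c$ and $g_1c$ in favour of higher operations: the identity $\sum_{i\geq1}\frac1{(i-1)!}q_i(c\odot x^{\odot(i-1)})=0$, a standard consequence of $Q^2=0$, expresses $q_1c$ through the $q_i$ with $i\geq2$, while \eqref{Bianchi} applied to $G$, using $G_\ast(x)=y\in\MC(W)$ so that $\sR_W(G_\ast(x))=0$, expresses $g_1c$ through the $g_{i+1}$ with $i\geq1$. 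After substituting, every term on the right contains $c$ together with at least one extra factor $x\in F^1V$, so continuity forces $c\in F^{p}V\Rightarrow c\in F^{p+1}V$; since $c\in F^1V$, completeness gives $c\in\bigcap_pF^pV=0$.

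Finally I would close the loop. For the converse inclusion $\sigma\circ\rho=\id$, given any $x\in\MC(V)$ I would set $y=G_\ast(x)$, $w=K(x)$ and check by the same substitutions (now with $\sR_V(x)=0$) that $\Phi(x)=x$, so that uniqueness of the fixed point forces $\sigma(y,w)=x$; combined with the previous paragraph this proves $\rho\circ\sigma=\id$ as well, hence bijectivity. For the last assertion, I would first note that $K^\Sigma_i(x^{\odot i})=0$ whenever $K(x)=0$, so the recursive formula for $G$ in Theorem \ref{th:transfer} gives $G_\ast(x)=g_1(x)$ on $\op{Ker}K\cap\MC(V)$; since $K(f_i)=0$ for all $i$ (because $Kf_1=0$ and $f_i\in\operatorname{Im}K$ for $i\geq2$) we have $F_\ast(y)\in\op{Ker}K$, and $g_1F_\ast(y)=y$, whence $\rho(F_\ast(y))=(y,0)$ and therefore $\MC(F)=\sigma(-,0)=\rho^{-1}(-,0)$. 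As $\rho$ restricts to a bijection between $\op{Ker}K\cap\MC(V)=\rho^{-1}(\MC(W)\times\{0\})$ and $\MC(W)$, whose first component is there given by $x\mapsto g_1(x)$, the inverse of $\MC(F)$ is exactly the restriction of $g_1$.
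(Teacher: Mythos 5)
Your proposal is correct and follows essentially the same strategy as the paper: construct the inverse via the fixed-point recursion, verify the two components by applying $K$ and $g_1$, kill $\sR_V(x)$ by combining the $Q^2=0$ identity with the Bianchi formula \eqref{Bianchi} and completeness, and handle the last claim via the side conditions on the $f_i$. The only cosmetic differences are that you apply \eqref{Bianchi} to $G$ directly (the paper uses $H=FG$), and you reach $G_\ast(x)=y$ through the intermediate identity $K\sR_V(x)=f_1(G_\ast(x)-y)$ rather than by applying $g_1$ to the fixed-point equation; both are equivalent computations.
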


\begin{proof} We proceed as in \cite{GetzlerLie}, and use the notations from the previous subsection. If $x\in\MC(V)$, then
\begin{equation}\label{eq:pointfixed} x=f_1g_1(x)-q_1K(x)-Kq_1(x)=f_1G_\ast(x)-q_1K(x)+\sum_{i\geq 2}\frac{1}{i!}\left(Kq_i-f_1g_i\right)\left(x^{\odot i}\right). \end{equation}

Equation \eqref{eq:pointfixed} implies injectivity of $\rho$ as follows: if $z\in\MC(V)$ is such that $G_*(x)=G_*(z)$, $K(x)=K(z)$, then subtracting the respective equations \eqref{eq:pointfixed} for $x$ and $z$ we obtain
\[ x-z = \sum_{i\geq2}\frac{1}{i!}\sum_{j=0}^{i-1}\left(Kq_i-f_1g_i\right)\left(x^{\odot j}\odot(x-z)\odot z^{\odot i-j-1}\right).\]
The above shows $x-z\in F^pV\solose x-z\in F^{p+1}V$, thus inductively $x-z\in\bigcap_{p\geq1}F^pV=0$.

Now we consider $y\in\MC(W)$, $K(v)\in K(V^1)$, and the sequence $x_n\in V^0$ defined by the recursion \eqref{eq:getzlerrecursion}: we show that this sequence is convergent. We suppose inductively, starting with $x_1-x_0\in F^1V=V$,  to have proved that $x_{n}-x_{n-1}\in F^{n}V$, and deduce
\[ x_{n+1}-x_n = \sum_{i\geq2}\sum_{j = 0}^{i-1}\frac{1}{i!}(Kq_i-f_1g_i)\left( x_n^{\odot j}\odot(x_n-x_{n-1})\odot x_{n-1}^{\odot i-j-1}\right)\in F^{n+1}V.\]
By completeness, the infinite sum $\sum_{n\geq0}(x_{n+1}-x_n)$ converges to a well defined $x\in V^1$, and by construction this satisfies
\[ x=f_1(y)-q_1K(v)+\sum_{i\geq 2}\frac{1}{i!}\left(Kq_i-f_1g_i\right)\left(x^{\odot i}\right). \]
Applying $K$, since $Kf_1=K^2=g_1K=0$,
\[ K(x) = -Kq_1K(v)=(q_1K+\id_V-f_1g_1)K(v)=K(v).\]
Applying $g_1$, since moreover $g_1f_1=\id_W$, $g_1q_1=r_1g_1$,
\[g_1(x)=y-\sum_{i\geq2}\frac{1}{i!}g_i(x^{\odot i})\qquad\Longrightarrow\qquad G_*(x)=y.\]
Applying $q_1$, we get
\[q_1(x)=q_1f_1(y)+\sum_{i\geq2}\frac{1}{i!}(f_1g_1-\id_V-Kq_1)q_i(x^{\odot i})-\sum_{i\geq2}\frac{1}{i!}q_1f_1g_i(x^{\odot i}).\]
We notice that
\[q_1f_1(y)-\sum_{i\geq2}\frac{1}{i!}q_1f_1g_i(x^{\odot i})=q_1f_1(y)-q_1f_1G_*(x)+q_1f_1g_1(x)=q_1f_1g_1(x)=f_1g_1q_1(x),\]
and substituting into the previous equation, we find
\[ \mathcal{R}_V(x)=f_1g_1\mathcal{R}_V(x)-\sum_{i\geq2}\frac{1}{i!}Kq_1q_i(x^{\odot i}).\]
We denote by $H$ the composition $H:=FG:(V,Q)\to(V,Q)$, whose linear part is  $h_1 =f_1g_1$. By Formula \eqref{Bianchi}, and since $y\in\MC(W)$, we see that
\[ 0 = \mathcal{R}_VF_*(y)=\mathcal{R}_VH_*(x)=h_1\mathcal{R}_V(x)+\sum_{k\geq2}\frac{1}{(k-1)!}h_k(\mathcal{R}_V(x)\odot x^{\odot k-1}).\]
On the other hand, since $Q^2=0\solose \sum_{j=1}^{i}q_jQ^j_i = 0$, and using Formula \eqref{codfromtaylor},
\begin{multline}\nonumber -\sum_{i\geq2}\frac{1}{i!}Kq_1q_i(x^{\odot i})=\sum_{i\geq2}\frac{1}{i!}\sum_{j=1}^{i-1}\frac{i!}{j!(i-j)!}Kq_{i-j+1}(q_j(x^{\odot j})\odot x^{\odot i-j})=\\=\sum_{k\geq2}\frac{1}{(k-1)!} Kq_k(\mathcal{R}_V(x)\odot x^{\odot k-1}).\end{multline}
Finally, putting everything together
\[\mathcal{R}_V(x)=\sum_{k\geq2}\frac{1}{(k-1)!}(Kq_k-h_k)(\mathcal{R}_V(x)\odot x^{\odot k-1}),\]
which shows $\mathcal{R}_V(x)\in F^pV\solose\mathcal{R}_V(x)\in F^{p+1}V$, thus inductively $\mathcal{R}_V(x)=0$. We have constructed $x\in\MC(V)$ such that $\MC(G)(x)=y$ and $K(x)=K(v)$, thus $\rho$ is surjective.

Since $f_{i}=\sum_{j=2}^iKq_jF^j_i$ for $i\geq2$, we see that $g_1f_i=Kf_i=0$ for $i\geq2$, hence the identities
\[ g_1F_*=\id_{W^1},\qquad KF_*=0.\]
Given $y\in\MC(W)$ we have $\MC(G)(\MC(F)(y))=y$ and by the above also $K(\MC(F)(y))=0$, thus $\MC(F)(y)=\rho^{-1}(y,0)$. Together with the first part of the theorem, this makes it clear that $\MC(F)=\rho^{-1}(-,0):\MC(W)\rh\op{Ker}\,K\bigcap\MC(V)$ is a bijective correspondence, and we have already observed that $g_1\MC(F)=\id_{\MC(W)}$.\end{proof}

\begin{remark} Given $F:(W,R)\to(V,Q)$ as in Theorem \ref{th:transfer}, we observe that the previous proof continues to work for any $L_\infty$ morphism $G:(V,Q)\to(W,R)$ left inverse to $F$ (not necessarily the particular one in the claim of Theorem \ref{th:transfer}).
\end{remark}

\section{Deligne-Getzler $\infty$-groupoids}

\subsection{Deligne-Getzler $\infty$-groupoids} Given a category $\mathbf{C}$, we denote by $\mathbf{C}^{op}$ the opposite category. Given a small category $\mathbf{S}$, we denote by $\mathbf{C}^{\mathbf{S}}$ the category of functors $\mathbf{S}\to\mathbf{C}$. In particular, we denote by $\Delta$ the ordinal category, and by $\mathbf{C}^{\Delta}$ (resp.: $\mathbf{C}^{\Delta^{op}}$) the category of cosimplicial (resp.: simplicial) objects in $\mathbf{C}$. We denote the category of simplicial sets by $\mathbf{SSet}$. 

Given a simplicial set $X$ and a dg space $L$, we denote by $\Omega^*(X;L)$ the complex of polynomial differential forms on $X$ with coefficients in $L$, and by $C^*(X;L)$ the complex of non-degenerate simplicial cochains on $X$ with coefficients in $L$: see for instance \cite{RHT}. There is a standard contraction from $\Omega^*(X;L)$ to $C^*(X;L)$, given by integrating forms over simplices in one direction and by the inclusion of Whitney's elementary forms in the other direction, together with a contracting homotopy defined by Dupont \cite{Dupont}. We won't need explicit formulas for this contraction, for which we refer to \cite{Dupont} or \cite{GetzlerLie}, just the fact that it is natural with respect to pull-back by morphism of simplicial sets and push-forward by morphism of dg spaces: more precisely, it is defined a functor $\operatorname{Dup}:\mathbf{SSet}^{op}\times\mathbf{DG}\to\mathbf{Ctr}$. In the complete setting, given a complete dg space $L$, the complex $C^*(X:L)$ is naturally complete with respect to the filtration $F^p C^*(X;L)=C^*(X;F^p L)$. On the other hand, in general the complex $\Omega(X;L)$ doesn't inherit a complete structure, and we have to replace it by its completion $\widehat{\Omega}^*(X;L):=\ilim\,\Omega^*(X;L/F^pL)$: this is a typical source of tedious, and ultimately unimportant, technicalities. Taking the limit of the contractions 
$\xymatrix{ C^*(X;L/F^pL)\ar@<2pt>[r]& \Omega^*(X;L/F^p L)\ar@<2pt>[l]}$ yields a complete contraction $\xymatrix{ C^*(X;L)\ar@<2pt>[r]& \widehat{\Omega}^*(X;L)\ar@<2pt>[l] }$, where we omitted the homotopies for notational simplicity, and a functor $\operatorname{Dup}:\mathbf{SSet}^{op}\times\widehat{\mathbf{DG}}\to\widehat{\mathbf{Ctr}}$. If $L$ is a complete $L_\infty$ algebra, so is $\widehat{\Omega}^*(X;L)$, by extension of scalars, and there is an induced complete $L_\infty$ algebra structure on $C^*(X;L)$ via homotopy transfer along Dupont's contraction. Furthermore, by Lemma \ref{lem:transfer}, the pull-back by morphisms of simplicial sets and the push-furward by \emph{strict} $L_\infty$ morphisms are strict $L_\infty$ morphisms, hence it is defined the functor of non-degenerate simplicial cochains $C^*(-;-): \mathbf{SSet}^{op}\times\widehat{\mathbf{L}_\infty}\to\widehat{\mathbf{L}_\infty}$. 

\begin{remark}\label{rem:limits} The functor $C^*(-;-): \mathbf{SSet}^{op}\times\widehat{\mathbf{L}_\infty}\to\widehat{\mathbf{L}_\infty}$ commutes with small limits: this can be checked using Remark \ref{rem:transfer as a functor} and the fact that so does $C^*(-;-): \mathbf{SSet}^{op}\times\widehat{\mathbf{DG}}\to\widehat{\mathbf{DG}}$.
\end{remark}
\begin{remark} Since the natural $\Omega^*(X;L)\to\widehat{\Omega}^*(X;L)$ commutes with Dupont's contracting homotopies, one can use Lemma \ref{lem:transfer} to deduce that the $L_\infty$ algebra structure on $C^*(X;L)$ induced via homotopy transfer from $\Omega^*(X;L)$ (which is not a complete space but is still an $L_\infty$ algebra) is automatically continuous, and in fact coincides with the above one induced via homotopy transfer from $\widehat{\Omega}^*(X;L)$.
\end{remark}

We shall denote by 
\[ \Delta_{\bullet}:\quad \xymatrix{ \Delta_0
	\ar@<2pt>[r]\ar@<-2pt>[r] & \Delta_1    \ar@<4pt>[r] \ar[r] \ar@<-4pt>[r] & \Delta_2 \ar@<6pt>[r] \ar@<2pt>[r] \ar@<-2pt>[r] \ar@<-6pt>[r]&\cdots}  \]
the standard cosimplicial simplex in $\mathbf{SSet}^\Delta$ (where we are omitting the codegeneracies for notational simplicity). By the above, there is a well defined functor $C^*(\Delta_\bullet;-): \widehat{\mathbf{L}_\infty}\xrightarrow{}\widehat{\mathbf{L}_\infty}^{\Delta^{op}}$, sending a (complete) $L_\infty$ algebra $L$ to the simplicial (complete) $L_\infty$ algebra of cochains on $\Delta_\bullet$ with coefficients in $L$.

\begin{definition}\label{def:higher deligne} Given a complete $L_\infty$ algebra $L$, the \emph{Deligne-Getzler $\infty$-groupoid} of $L$ is the simplicial set $\Del_\infty(L)_\bullet:=\MC(C^*(\Delta_\bullet;L))$ of Maurer-Cartan cochains on $\Delta_\bullet$ with coefficients in $L$. In other words, the functor $\Del_\infty(-):\widehat{\mathbf{L}_\infty}\to\mathbf{SSet}$ is the composition
\[ \Del_\infty(-): \widehat{\mathbf{L}_\infty}\xrightarrow{C^*(\Delta_\bullet;-)}\widehat{\mathbf{L}_\infty}^{\Delta^{op}}\xrightarrow{\MC(-)}\mathbf{SSet}.\]
\end{definition}

\begin{remark}\label{ptu} We observe that the simplicial set $\Del_\infty(L)$ only depends on the $L_\infty[1]$ structure $Q$ on $L$, and not on the particular filtration $F^\bullet L$ making $(L,Q)$ into a complete $L_\infty$ algebra.\end{remark}

The simplicial set $\Del_\infty(L)$ was introduced by Getzler \cite{GetzlerLie}, although with a slightly different definition. More precisely, for a nilpotent $L_\infty$ algebra $L$, Getzler defines a simplicial set $\gamma(L)$ by declaring the $n$-simplices to be the Maurer-Cartan forms $\omega\in\MC(\Omega^*(\Delta_n;L))$ in the kernel of Dupont's contracting homotopy. This can be straightforwardly extended to complete $L_\infty$ algebras by replacing $\Omega^*(\Delta_n;L)$ with $\widehat{\Omega}^*(\Delta_n;L)$. The following proposition is an immediate consequence of Theorem \ref{th:kuranishi}.
\begin{proposition} There is a natural isomorphism $\Del_\infty(-)\xrightarrow{\cong}\gamma(-)$ of functors $\widehat{\mathbf{L}_\infty}\to\mathbf{SSet}$.\end{proposition}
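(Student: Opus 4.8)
The plan is to apply the formal Kuranishi theorem (Theorem \ref{th:kuranishi}) levelwise to Dupont's contraction and then to upgrade the resulting levelwise bijections to a natural isomorphism by invoking Lemma \ref{lem:transfer}. First I would fix a complete $L_\infty$ algebra $L$ and, for each $n$, specialize Theorem \ref{th:kuranishi} to Dupont's complete contraction $\operatorname{Dup}(\Delta_n;L)$, taking $V=\widehat{\Omega}^*(\Delta_n;L)$ and $W=C^*(\Delta_n;L)$ with its transferred $L_\infty$ structure. The final assertion of Theorem \ref{th:kuranishi} then yields a bijection $\MC(F)$ between $\MC(C^*(\Delta_n;L))=\Del_\infty(L)_n$ and $\op{Ker}\,K\cap\MC(\widehat{\Omega}^*(\Delta_n;L))$, with inverse the restriction of $g_1$. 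By the very definition of $\gamma$ recalled above, the target $\op{Ker}\,K\cap\MC(\widehat{\Omega}^*(\Delta_n;L))$ is exactly $\gamma(L)_n$, so for every $n$ one obtains a bijection $\Del_\infty(L)_n\xrightarrow{\cong}\gamma(L)_n$.

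The remaining point is naturality, namely that these levelwise bijections commute with the simplicial structure maps and with the maps induced by strict $L_\infty$ morphisms $L\to L'$. Both families of maps are instances of the functoriality of $\operatorname{Dup}$: a map $\alpha\colon\Delta_m\to\Delta_n$ and a strict morphism $L\to L'$ each produce a morphism in $\widehat{\mathbf{Ctr}}$ whose $\operatorname{pr}_1$-component (pull-back of forms, respectively push-forward) is a \emph{strict} $L_\infty$ morphism, as recorded in the discussion preceding Remark \ref{rem:limits}. I would therefore apply Lemma \ref{lem:transfer}: since $\operatorname{pr}_1$ of each such morphism is strict, the lemma furnishes the commuting square relating $F$ and $F'$, and applying $\MC(-)$ shows that $\MC(F)$ intertwines the structure maps in both variables. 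Moreover, a morphism of complete contractions commutes with the homotopies $K$ by definition, so $\op{Ker}\,K$ is preserved under all of these maps; this is what makes $\gamma(-)$ a well-defined functor $\widehat{\mathbf{L}_\infty}\to\mathbf{SSet}$ in the first place, and it guarantees that the target simplicial sets $\gamma(L)$ are correctly matched. Assembling the levelwise bijections then gives the desired natural isomorphism $\Del_\infty(-)\xrightarrow{\cong}\gamma(-)$.

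I expect the only genuine work to lie in the bookkeeping of this two-variable naturality: one must check that both the simplicial face and degeneracy maps and the $L_\infty$-functorial maps really do arise as morphisms of complete contractions with strict first component, so that Lemma \ref{lem:transfer} applies uniformly. Once this verification is in place there is no separate analytic or combinatorial obstacle, and the statement is, as claimed, an immediate consequence of Theorem \ref{th:kuranishi}.
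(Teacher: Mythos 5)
Your proposal is correct and is exactly the argument the paper intends: the paper gives no written proof beyond declaring the proposition ``an immediate consequence of Theorem~\ref{th:kuranishi}'', and your levelwise application of that theorem to Dupont's contraction, combined with Lemma~\ref{lem:transfer} (and the fact that morphisms of contractions commute with the homotopies, so $\operatorname{Ker} K$ is preserved) to handle naturality in both variables, is precisely the intended unpacking.
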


\begin{definition} Given a simplicial set $X$, we denote by $\Delta X$ the (small) category of simplices of $X$ (cf. \cite[Example 15.1.14]{HirMC}): its objects are the morphisms $\sigma:\Delta_n\to X$ in $\mathbf{SSet}$ (which are in bijective correspondence with the $n$-simplices $x\in X_n$ of $X$), for some $n\geq0$; the arrows are the morphisms $\Delta_m\to\Delta_n$ over $X$. We recall (cf. \cite[Proposition 15.1.20]{HirMC}) that there is a natural isomorphism $X=\dlim_{\{\sigma:\Delta_n\to X\}\in\Delta X}\Delta_n$.
\end{definition}

We notice the following formal consequence of Definition \ref{def:higher deligne}, which anticipates the more profound relationship between $\Del_\infty(-)$ and simplicial mapping spaces in Theorem \ref{th:Delignevsmappingspaces}.

\begin{lemma}\label{prop:MC(C*(-;-))} There is a natural isomorphism $\MC(C^*(-;-))\xrightarrow{\cong}\mathbf{SSet}(-,\Del_\infty(-))$ of functors $\mathbf{SSet}^{op}\times\widehat{\mathbf{L}_\infty}\to\mathbf{Set}$.
\end{lemma}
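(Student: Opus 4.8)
The plan is to establish the natural isomorphism $\MC(C^*(-;-)) \xrightarrow{\cong} \mathbf{SSet}(-,\Del_\infty(-))$ by unwinding both sides into a common limit description indexed by the category of simplices $\Delta X$. The key input is the colimit presentation $X = \dlim_{\{\sigma:\Delta_n\to X\}\in\Delta X}\Delta_n$ recalled just above the statement, together with the fact (Remark \ref{rem:limits}) that the functor $C^*(-;-)\colon\mathbf{SSet}^{op}\times\widehat{\mathbf{L}_\infty}\to\widehat{\mathbf{L}_\infty}$ carries small colimits in the first variable to small limits, and the elementary fact that $\MC(-)$ likewise sends limits of complete $L_\infty$ algebras to limits of sets (it is a right adjoint / representable-type construction, being cut out by the equation $\sR_V(x)=0$).

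First I would fix a complete $L_\infty$ algebra $L$ and a simplicial set $X$, and compute the left-hand side. Applying $C^*(-;L)$ to the colimit $X=\dlim_{\sigma\in\Delta X}\Delta_n$ and using Remark \ref{rem:limits} gives a natural isomorphism of complete $L_\infty$ algebras
\[
 C^*(X;L)\;\cong\;\ilim_{\{\sigma:\Delta_n\to X\}\in\Delta X} C^*(\Delta_n;L).
\]
Since $\MC(-)$ preserves the relevant limits, this yields
\[
 \MC(C^*(X;L))\;\cong\;\ilim_{\{\sigma:\Delta_n\to X\}\in\Delta X}\MC(C^*(\Delta_n;L))
 \;=\;\ilim_{\{\sigma:\Delta_n\to X\}\in\Delta X}\Del_\infty(L)_n,
\]
where the last equality is just Definition \ref{def:higher deligne}. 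On the other hand, the right-hand side $\mathbf{SSet}(X,\Del_\infty(L))$ admits exactly the same description: a simplicial map $X\to\Del_\infty(L)$ is, by the colimit presentation of $X$ and the (co)Yoneda lemma $\mathbf{SSet}(\Delta_n,\Del_\infty(L))\cong\Del_\infty(L)_n$, precisely a compatible family of $n$-simplices indexed by $\Delta X$, i.e. an element of the same inverse limit $\ilim_{\{\sigma:\Delta_n\to X\}\in\Delta X}\Del_\infty(L)_n$.

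Having matched both sides with the single limit $\ilim_{\sigma\in\Delta X}\Del_\infty(L)_n$, the desired isomorphism is obtained by composing the two identifications, and I would then verify naturality in both variables. Naturality in $X$ follows because a morphism $X'\to X$ of simplicial sets induces a functor $\Delta X'\to\Delta X$ compatible with both presentations; naturality in $L$ follows because a strict $L_\infty$ morphism $L\to M$ induces maps of all the complexes $C^*(\Delta_n;-)$ commuting with the limit structure, by functoriality of $C^*(-;-)$ in its second variable. The main obstacle I anticipate is not any single hard computation but the careful bookkeeping needed to confirm that the isomorphism constructed via the abstract limit description agrees with the \emph{evident} candidate map --- the one sending a Maurer-Cartan cochain $x\in\MC(C^*(X;L))$ to the simplicial map whose value on a simplex $\sigma:\Delta_n\to X$ is $\MC(C^*(\sigma;L))(x)\in\Del_\infty(L)_n$; one should check this candidate is well defined, simplicial, and inverse to the obvious assembly map, so that the isomorphism is genuinely the natural one anticipating Theorem \ref{th:Delignevsmappingspaces} rather than merely an abstract coincidence of limits.
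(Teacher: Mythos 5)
Your proposal is correct and follows essentially the same route as the paper: write $X$ as the colimit of its simplices, use that $C^*(-;L)$ turns this into a limit of complete $L_\infty$ algebras (Remark \ref{rem:limits}), that $\MC(-)$ preserves small limits, and identify $\mathbf{SSet}(X,\Del_\infty(L))$ with the same inverse limit over $\Delta X$ via Yoneda. Your closing remark about matching the abstract identification with the evident evaluation map is a reasonable extra check, but the paper simply subsumes it under the verification of naturality.
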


\begin{proof} Let $L$ be a complete $L_\infty$ algebra and $X$ a simplicial set. Using Remark \ref{rem:limits}, there is a strict $L_\infty$ isomorphism $C^*(X;L)\to\ilim_{\{\sigma:\Delta_n\to X\}\in(\Delta X)^{op}}C^*(\Delta_n;L)$. Since $\MC(-)$ commutes with small limits,
	\[\MC(C^*(X;L))=\ilim_{(\Delta X)^{op}} \,\MC(C^*(\Delta_n;L))=:\ilim_{(\Delta X)^{op}}\,\Del_\infty(L)_{n} = \]
	\[=\ilim_{(\Delta X)^{op}}\,\mathbf{SSet}(\Delta_n,\Del_\infty(L))=\mathbf{SSet}(\dlim_{\Delta X}\Delta_{n},\Del_\infty(L))=\mathbf{SSet}(X,\Del_\infty(L)).\]
It is easy to check that this identification is  natural in $X$ and $L$.\end{proof}

There are some technical and conceptual advantages in working with cochains rather than with forms. For instance, we have the following useful lemma, which follows from the fact that both the functors $C^*(\Delta_\bullet;-):\widehat{\mathbf{L}_\infty}\to\widehat{\mathbf{L}_\infty}^{\Delta^{op}}$ and $\MC(-):\widehat{\mathbf{L}_\infty}^{\Delta^{op}}\to\mathbf{SSet}$ commute with small limits.
\begin{lemma}\label{lem:Delvslimits} The functor $\Del_\infty(-):\widehat{\mathbf{L}_\infty}\to\mathbf{SSet}$ commutes with small limits.\end{lemma}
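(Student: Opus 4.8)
The plan is to exploit the factorization of $\Del_\infty(-)$ recorded in Definition \ref{def:higher deligne} and to reduce the claim, degree by degree, to the two limit-preservation facts quoted in the statement. First I would recall that limits in $\mathbf{SSet}=\mathbf{Set}^{\Delta^{op}}$ are computed pointwise, i.e. in each simplicial degree separately: this is the standard fact that limits in a functor category are formed objectwise. Consequently, to show that $\Del_\infty(-)\colon\widehat{\mathbf{L}_\infty}\to\mathbf{SSet}$ sends a small limit $L=\ilim_\alpha L_\alpha$ to the limit of the $\Del_\infty(L_\alpha)$, it suffices to produce, for every $n\geq0$, a natural isomorphism $\Del_\infty(L)_n\cong\ilim_\alpha\Del_\infty(L_\alpha)_n$ of sets, compatible with the cofaces and codegeneracies of $\Delta_\bullet$.

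Next, fixing $n$, I would use the description $\Del_\infty(L)_n=\MC(C^*(\Delta_n;L))$, so that the degree-$n$ part of $\Del_\infty(-)$ is the composite $\widehat{\mathbf{L}_\infty}\xrightarrow{C^*(\Delta_n;-)}\widehat{\mathbf{L}_\infty}\xrightarrow{\MC(-)}\mathbf{Set}$. By Remark \ref{rem:limits} the functor $C^*(-;-)$, and hence $C^*(\Delta_n;-)$, commutes with small limits, giving a strict $L_\infty$ isomorphism $C^*(\Delta_n;\ilim_\alpha L_\alpha)\cong\ilim_\alpha C^*(\Delta_n;L_\alpha)$. Applying $\MC(-)$, which commutes with small limits (as already used in the proof of Lemma \ref{prop:MC(C*(-;-))}), yields
\[ \Del_\infty(L)_n=\MC\bigl(C^*(\Delta_n;\ilim_\alpha L_\alpha)\bigr)\cong\ilim_\alpha\MC\bigl(C^*(\Delta_n;L_\alpha)\bigr)=\ilim_\alpha\Del_\infty(L_\alpha)_n. \]
Since each step of this chain is natural in the $L_\infty$ variable, the isomorphism is compatible with the simplicial structure maps, and reassembling over all $n$ gives the desired isomorphism $\Del_\infty(\ilim_\alpha L_\alpha)\cong\ilim_\alpha\Del_\infty(L_\alpha)$ in $\mathbf{SSet}$.

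In short, the argument is a composition of limit-preserving functors checked levelwise, so there is no serious obstacle: the only point requiring a little care is the claim that $\MC(-)$ preserves small limits, which amounts to observing that the curvature $\sR_V$ is computed componentwise in a limit (the structure maps $q_i$ of $\ilim_\alpha V_\alpha$ being induced from those of the $V_\alpha$), whence a compatible family $(x_\alpha)$ is Maurer--Cartan if and only if each $x_\alpha$ is; this is exactly the fact invoked in Lemma \ref{prop:MC(C*(-;-))}. The remaining bookkeeping --- naturality in $n$ and the pointwise formation of limits in $\mathbf{SSet}$ --- is entirely routine.
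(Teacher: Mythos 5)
Your proposal is correct and follows exactly the route the paper takes: the paper deduces Lemma \ref{lem:Delvslimits} directly from the facts that $C^*(\Delta_n;-)$ (Remark \ref{rem:limits}) and $\MC(-)$ each commute with small limits, combined with the levelwise computation of limits in $\mathbf{SSet}$. Your additional remarks on the componentwise nature of the curvature and on naturality in $n$ are exactly the routine details the paper leaves implicit.
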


To illustrate the conceptual advantages, we show how Lemma 5.3 from \cite{GetzlerLie} becomes more transparent in this setting. Given a cochain $\beta\in C^i(\Delta_n;L)$ and $0\leq i_0<\cdots< i_k\leq n$, we denote by $\beta_{i_0\cdots i_k}\in L^{i-k}$ the evaluation of $\beta$ on the $k$-simplex of $\Delta_n$ spanned by the vertices $i_0,\ldots,i_k$. For $i=0,\ldots,n$, we define a homotopy $h^i:C^*(\Delta_n;L)\to C^{*-1}(\Delta_{n};L)$ by
\begin{equation*}  h^i(\alpha)_{i_0\cdots i_k}=\left\{\begin{array}{ll} 0 &\mbox{if $i\in\{i_0,\cdots,i_k\}$}\\ (-1)^j\alpha_{i_0\cdots i_{j-1}ii_{j}\cdots i_{k}}&\mbox{if $0\leq i_0<\cdots<i_{j-1}<i<i_j<\cdots<i_k\leq n$}\end{array}\right.
\end{equation*}
We denote by $e_i:\Delta_0\to\Delta_n$ the inclusion of the $i$-th vertex of $\Delta_n$ and by $\pi:\Delta_n\to\Delta_0$ the final morphism. It is easy to check that the above operator $h^i$ fits into a complete contraction
\[\xymatrix{L=C^*(\Delta_0;L)\ar@<2pt>[r]^-{\pi^\ast}& C^*(\Delta_n;L) \ar@<2pt>[l]^-{e_i^\ast}} \]
If $\de_i:\Delta_{n-1}\to\Delta_n$ is the inclusion of the $i$-th face of the simplex $\Delta_n$, then $\de_i^\ast$ sends $h^i(C^{1}(\Delta_n;L))$ isomorphically onto $C^{0}(\Delta_{n-1};L)$. Lemma \ref{lem:transferalongstrictmorphisms} and Theorem \ref{th:kuranishi} imply the following result.

\begin{proposition}\label{prop:Delinsoldoni} For all $i=0,\ldots,n$, the correspondence
	\[\rho^i:\Del_\infty(L)_n\to\MC(L)\times C^0(\Delta_{n-1};L):\alpha\to (e_i^\ast(\alpha),\de_i^\ast h^i(\alpha))\]
	is bijective.\end{proposition}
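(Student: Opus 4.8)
The plan is to read the contraction recorded just before the statement, with legs $\pi^\ast:L=C^*(\Delta_0;L)\to C^*(\Delta_n;L)$ and $e_i^\ast:C^*(\Delta_n;L)\to C^*(\Delta_0;L)=L$ and contracting homotopy $h^i$, as an instance of the data in Theorem \ref{th:transfer}, and then to apply the formal Kuranishi theorem (Theorem \ref{th:kuranishi}) to it. Concretely, I would set $V=C^*(\Delta_n;L)$ with the $L_\infty$ structure $Q$ transferred from $\widehat{\Omega}^*(\Delta_n;L)$ along Dupont's contraction, and $W=C^*(\Delta_0;L)=L$ (since $\Delta_0$ is a point, $\widehat{\Omega}^*(\Delta_0;L)=L$, so the induced structure on $C^*(\Delta_0;L)$ is the original one on $L$). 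The structure maps are $f_1=\pi^\ast$, $g_1=e_i^\ast$ and $K=h^i$; the identity $e_i^\ast\pi^\ast=(\pi e_i)^\ast=\id_L$, together with the side conditions already recorded, confirm this is a complete contraction in the sense of Theorem \ref{th:transfer}.

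The key step is to identify the ingredients of the map $\rho$ from Theorem \ref{th:kuranishi} with those in the statement. Since $e_i:\Delta_0\to\Delta_n$ is a morphism of simplicial sets, its pull-back $e_i^\ast:C^*(\Delta_n;L)\to L$ is a \emph{strict} $L_\infty$ morphism, by the functoriality of $C^*(-;-):\mathbf{SSet}^{op}\times\widehat{\mathbf{L}_\infty}\to\widehat{\mathbf{L}_\infty}$. Hence Lemma \ref{lem:transferalongstrictmorphisms} applies to the contraction above: it tells us both that the $L_\infty$ structure transferred onto $W$ is literally the original structure on $L$ (so that $\MC(W)=\MC(L)$ is the ordinary Maurer-Cartan set of $L$), and that the transferred $L_\infty$ morphism $G$ of Theorem \ref{th:transfer} equals the strict morphism $g_1=e_i^\ast$. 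In particular $\MC(G)(\alpha)=e_i^\ast(\alpha)$.

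With these identifications in place, Theorem \ref{th:kuranishi} gives directly that
\[\rho:\Del_\infty(L)_n=\MC(V)\rh\MC(L)\times h^i\bigl(C^1(\Delta_n;L)\bigr):\alpha\rh\bigl(e_i^\ast(\alpha),h^i(\alpha)\bigr)\]
is bijective, using $V^1=C^1(\Delta_n;L)$ and $K=h^i$. Finally I would post-compose with $\id_{\MC(L)}\times\de_i^\ast$, where $\de_i^\ast$ restricts to the isomorphism $h^i(C^1(\Delta_n;L))\xrightarrow{\cong}C^0(\Delta_{n-1};L)$ recorded immediately before the statement; as this is a bijection, the composite $\rho^i(\alpha)=(e_i^\ast(\alpha),\de_i^\ast h^i(\alpha))$ is again a bijection, which is exactly the claim.

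The only genuinely delicate point, and the one I would check most carefully, is the bookkeeping that makes Lemma \ref{lem:transferalongstrictmorphisms} applicable and produces $G=e_i^\ast$ on the nose: one must be certain that $W=C^*(\Delta_0;L)$ carries precisely the original $L_\infty$ structure of $L$ (rather than a merely isomorphic one) and that the Dupont-transferred structure on $C^*(\Delta_n;L)$ restricts strictly along $e_i^\ast$. Beyond that, the argument is a formal composition of the two cited results with the degree bookkeeping (Maurer-Cartan elements live in degree one, so the factor $K(V^1)$ is $h^i(C^1(\Delta_n;L))$), which is routine.
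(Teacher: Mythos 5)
Your proposal is correct and is precisely the argument the paper intends: the text immediately preceding the statement sets up the contraction $(\pi^\ast, e_i^\ast, h^i)$ and the isomorphism $\de_i^\ast\colon h^i(C^1(\Delta_n;L))\xrightarrow{\cong}C^0(\Delta_{n-1};L)$, and then derives the proposition by citing exactly Lemma \ref{lem:transferalongstrictmorphisms} (to get that the transferred structure on $L$ is the original one and $G=e_i^\ast$) and Theorem \ref{th:kuranishi}. Your write-up simply makes explicit the bookkeeping the paper leaves to the reader.
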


\begin{remark}\label{rem:BCH} We can visualize the previous result as follows: given $n\geq0$ and $0\leq i\leq n$, we assign a Maurer-Cartan element to the $i$-th vertex $e_i$ of $\Delta_n$, and an element in $L^{1-k}$ to every $k$-simplex of $\Delta_n$ containing $e_i$, as in the following figure, where $n=2$, $i=1$, $x\in\MC(L)$, $a,b\in L^0$ and $\eta\in L^{-1}$,
\[ \xymatrix{ & & x\ar[rrdd]^b & &\\ & & \eta & &\\ \ar[uurr]^a & & & &}\]
The previous proposition tells us that for any such assignment there is a unique Maurer-Cartan cochain $\alpha\in\Del_\infty(V)_n$ evaluating to the given elements in the open star around $e_i$. Evaluating this cochain on the face $\de_i\Delta_n$ opposite to $e_i$ defines the higher Baker-Campbell-Hausdorff products $\rho^x_n(-)$ introduced by Getzler in \cite[Definition 5.5]{GetzlerLie},
\[ \xymatrix{ & & x\ar[rrdd]^b & & & & & & & x\ar[rrdd]^b & &\\ & & \eta & & & \ar@{~>}[r]& & & & \eta & &\\ \ar[uurr]^a & & & & & & & \rho^x_1(a)\ar[uurr]^a\ar[rrrr]_{\rho_2^x(a,b,\eta)} & & & &\rho^x_1(b)}\]
When $L$ is a complete dg Lie algebra, the Gauge action and the Baker-Campbell-Hausdorff product on $L$ can be recovered as particular instances of the previous functions $\rho^x_n(-)$, cf. \cite{GetzlerLie} or \cite{tesi} for details.
\end{remark}

As a Corollary, or via a similar reasoning (cf. \cite[Theorem 5.2.10]{tesi}), we deduce the following 
\begin{corollary}\label{th:surjective=Kanfibration} A strict morphism $f:L\to M$ of complete $L_\infty$ algebras induces a Kan fibration $\Del_\infty(f):\Del_\infty(L)\to\Del_\infty(M)$ of simplicial sets if and only if it is surjective in degrees~$\leq0$. In particular, the functor $\Del_\infty(-)$ factors through the full subcategory $\mathbf{Kan}\subset\mathbf{SSet}$ of Kan complexes.\end{corollary}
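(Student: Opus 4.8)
The plan is to verify the right lifting property against the horn inclusions $\Lambda^k_n\hookrightarrow\Delta_n$ ($n\geq1$, $0\leq k\leq n$), translating everything through the bijections $\rho^i$ of Proposition \ref{prop:Delinsoldoni}. The crucial observation is that for the horn $\Lambda^k_n$ the right vertex to use is $e_k$ itself: the only two simplices of $\Delta_n$ \emph{not} contained in $\Lambda^k_n$ are the top cell and the missing face $\de_k\Delta_n$, and of these only the top cell lies in the open star of $e_k$, since $\de_k\Delta_n$ is opposite to $e_k$. Consequently, via $\rho^k:\Del_\infty(L)_n\xrightarrow{\cong}\MC(L)\times C^0(\Delta_{n-1};L)$ and the open-star description of Remark \ref{rem:BCH}, a horn $\Lambda^k_n\to\Del_\infty(L)$ prescribes the vertex datum $e_k^\ast\alpha\in\MC(L)$ together with the value of the cochain $\de_k^\ast h^k(\alpha)\in C^0(\Delta_{n-1};L)$ on every simplex of $\Delta_{n-1}$ \emph{except} its top cell, whereas the value on the top cell is free; by the degree convention $\beta_{i_0\cdots i_k}\in L^{i-k}$ this free value is an element of $L^{1-n}$, and choosing it reconstructs, through $(\rho^k)^{-1}$, both the interior and the missing face of a filler.

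For the forward implication, suppose $f$ is surjective in degrees $\leq0$ and consider a lifting problem given by a horn $g:\Lambda^k_n\to\Del_\infty(L)$ and a filler $\bar\alpha\in\Del_\infty(M)_n$ with $\Del_\infty(f)\circ g=\bar\alpha|_{\Lambda^k_n}$. Writing $\rho^k(\bar\alpha)=(\bar x,\bar\beta)$, the horn $g$ prescribes $x:=e_k^\ast g\in\MC(L)$ and the restriction of $\beta:=\de_k^\ast h^k$ to all non-top simplices, and by compatibility $f(x)=\bar x$ and $f(\beta)=\bar\beta$ there. Since the remaining free datum lives in $L^{1-n}$ and $1-n\leq0$, surjectivity of $f$ produces $c\in L^{1-n}$ with $f(c)=\bar\beta(\text{top cell})$; setting $\alpha:=(\rho^k)^{-1}(x,\beta)$ with $\beta(\text{top cell}):=c$ yields, by the naturality of $\rho^k$ with respect to strict morphisms (Lemma \ref{lem:transfer}, together with the functoriality of $e_k^\ast,\de_k^\ast,h^k$), a simplex with $\alpha|_{\Lambda^k_n}=g$ and $\rho^k(\Del_\infty(f)(\alpha))=(\bar x,\bar\beta)=\rho^k(\bar\alpha)$, hence $\Del_\infty(f)(\alpha)=\bar\alpha$. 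This exhibits $\Del_\infty(f)$ as a Kan fibration.

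For the converse I would run the same construction in reverse to detect surjectivity. Fixing $d\leq0$, set $n:=1-d\geq1$, and given $\bar c\in M^d=M^{1-n}$ consider the lifting problem whose horn $g:\Lambda^k_n\to\Del_\infty(L)$ is the restriction of the degenerate simplex $\pi^\ast(0)$ based at $0\in\MC(L)$ (so $e_k^\ast g=0$ and $\de_k^\ast h^k$ vanishes on every non-top simplex) and whose filler is $\bar\alpha:=(\rho^k)^{-1}(0,\bar\beta)\in\Del_\infty(M)_n$, where $\bar\beta$ vanishes except on the top cell, where it equals $\bar c$; by construction $\bar\alpha|_{\Lambda^k_n}=\Del_\infty(f)\circ g$. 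Any filler $\alpha$ supplied by the Kan fibration hypothesis satisfies $\rho^k(\alpha)=(0,\beta')$ with $\beta'$ vanishing on all non-top simplices, and $\Del_\infty(f)(\alpha)=\bar\alpha$ forces $f(\beta'(\text{top cell}))=\bar c$; thus $c:=\beta'(\text{top cell})\in L^d$ is the desired preimage. Letting $d$ range over all non-positive integers gives surjectivity of $f$ in degrees $\leq0$. Finally, applying the forward implication to the terminal strict morphism $L\to0$ (surjective in all degrees) shows that $\Del_\infty(L)\to\Del_\infty(0)=\ast$ is a Kan fibration, i.e.\ that $\Del_\infty(L)$ is a Kan complex.

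The main obstacle — in fact the only genuinely nontrivial point — is the combinatorial bookkeeping in the first paragraph: one must check exactly which simplices of $\Delta_n$ lie in $\Lambda^k_n$ and match this against the open-star description of $\rho^k$, so as to isolate the single free parameter in $L^{1-n}$ and confirm that $1-n\leq0$ is precisely the range of degrees in which surjectivity of $f$ is required. Once this identification is secured, both implications and the compatibility with $\Del_\infty(f)$ follow formally from the bijectivity in Proposition \ref{prop:Delinsoldoni} and the functoriality already recorded in Lemma \ref{lem:transfer}.
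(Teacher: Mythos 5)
Your argument is correct and is essentially the route the paper has in mind: it derives horn-filling from the bijection $\rho^k$ of Proposition \ref{prop:Delinsoldoni}, after observing that the only simplex of the open star of $e_k$ missing from $\Lambda^k_n$ is the top cell, whose value is the single free parameter in $L^{1-n}$, so that lifting reduces to surjectivity of $f$ in degree $1-n\leq 0$. The one ingredient worth making explicit is the horn analogue of Proposition \ref{prop:Delinsoldoni} --- obtained by noting that the contraction $(\pi^\ast,e_k^\ast,h^k)$ restricts to $C^*(\Lambda^k_n;L)$ and applying Theorem \ref{th:kuranishi} there --- since this is what guarantees both that a horn is exactly the prescribed open-star data and that every choice of the free top-cell value yields a filler restricting to the given horn; this is precisely the ``similar reasoning'' the paper alludes to.
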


\subsection{Deligne-Getzler $\infty$-groupoids and models of mapping spaces} The aim of this subsection is to review, and slightly generalize, results from \cite{BrSz,BerglundLie} relating the functor $\Del_\infty(-)$ and simplicial mapping spaces. We start with some preliminary results.

\begin{definition} A central extension $0\to K\to L\to M\to 0$ of complete $L_\infty$ algebras is the datum of a surjective, continuous, strict $L_\infty$ morphism $L\to M$ such that its kernel $K\subset L$ is an abelian $L_\infty$ ideal. Denoting by $q_n:L[1]^{\odot n}\to L[1]$ the Taylor coefficients of the $L_\infty$ structure on $L$, the latter requirement means that for all $n\geq2$ the map $q_n$ vanishes whenever one of its arguments is in $K[1]$. We call $K,L$ and $M$ respectively the fiber, the total space and the base of the $L_\infty$ extension.
\end{definition} 

\begin{proposition}\label{prop:obstructionforMC} Given $0\rh K\rh L\rh M\rh 0$ a central extension of complete $L_\infty$ algebras, there is an obstruction map $o:\MC(M)\rh H^2(K)$ with the property $o(x)=0$ if and only if $x$ lifts to a Maurer-Cartan element $y\in\MC(L)$. If the set of Maurer-Cartan liftings of $x$ is not empty, it has the structure of an affine space over $Z^1(K)$: more precisely, given a Maurer-Cartan lifting $y\in\MC(L)$ of $x$, the set of all Maurer-Cartan liftings of $x$ is in bijective correspondence with $Z^1(K)$ via $Z^1(K)\rh\MC(L):z\rh y+z$. 
\end{proposition}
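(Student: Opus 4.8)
The plan is to reduce the Maurer--Cartan equation on $L$, along the fibre direction $K$, to an \emph{affine-linear} equation, using crucially that $K$ is an abelian ideal. First I would choose a degree-preserving linear section $\iota\colon M\to L$ of the projection $\pi\colon L\to M$; such a section exists because $\pi$ is a strict surjection of graded spaces (continuity of $\iota$ is not needed). Every element of $L^1$ lifting a given $x\in\MC(M)$ then has the form $\iota(x)+k$ with $k\in K^1$. The key computation is that, since the Taylor coefficients $q_i$ with $i\geq2$ vanish as soon as one argument lies in $K[1]$, and since $\iota(x)$ and $k$ are even elements of $L[1]$ (so no Koszul signs appear), all the higher cross-terms in the binomial expansion of $(\iota(x)+k)^{\odot i}$ are killed, giving
\[ \sR_L(\iota(x)+k)=\sR_L(\iota(x))+q_1(k),\qquad k\in K^1. \]
Writing $\omega(x):=\sR_L(\iota(x))$ and using that $\pi\,\sR_L=\sR_M\,\pi$ (because $\pi$ is strict) together with $\sR_M(x)=0$, we get $\pi(\omega(x))=0$, i.e.\ $\omega(x)\in K^2$; here $q_1$ restricted to $K$ is the internal differential of the subcomplex $K$.

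Next I would check that $\omega(x)$ is a cocycle. Applying the Bianchi identity for the curvature, namely the consequence of the $L_\infty$ relation $Q^2=0$ which reads $\sum_{b\geq0}\frac{1}{b!}\,q_{b+1}(\sR_L(y)\odot y^{\odot b})=0$ for $y\in L^1$ (cf.\ the manipulations in the proof of Theorem \ref{th:kuranishi}), to $y=\iota(x)$: in each term with $b\geq1$ the map $q_{b+1}$, with $b+1\geq2$, is applied to an argument $\omega(x)\in K$, so it vanishes by the abelian-ideal hypothesis; only the term $b=0$, namely $q_1(\omega(x))$, survives, whence $q_1(\omega(x))=0$ and $\omega(x)\in Z^2(K)$. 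I then define the obstruction $o(x):=[\omega(x)]\in H^2(K)$. That $o(x)$ does not depend on the chosen section follows from the displayed formula above: a second section differs from $\iota$ by some $k_0\in K^1$, which changes $\omega(x)$ by $q_1(k_0)\in B^2(K)$, leaving the class unchanged.

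The lifting criterion is now immediate. By the displayed formula, $\iota(x)+k\in\MC(L)$ if and only if $\omega(x)=-q_1(k)$, that is, if and only if $\omega(x)$ is a coboundary, i.e.\ $o(x)=0$. Conversely, any lift $y\in\MC(L)$ of $x$ equals $\iota(x)+k$ for a unique $k\in K^1$ and forces $\omega(x)=-q_1(k)$, so $o(x)=0$. For the affine structure I would repeat the linearization around an honest lift: if $y\in\MC(L)$ lifts $x$ and $z\in K^1$, the same abelian-ideal cancellation yields $\sR_L(y+z)=\sR_L(y)+q_1(z)=q_1(z)$, so $y+z\in\MC(L)$ exactly when $z\in Z^1(K)$. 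Since any two lifts of $x$ differ by an element $z=y'-y\in K^1$, the assignment $z\mapsto y+z$ is the desired bijection between $Z^1(K)$ and the set of Maurer--Cartan lifts of $x$.

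The only genuine content is the abelian-ideal cancellation, which simultaneously makes $\sR_L$ affine-linear along $K$ and kills every Bianchi term except the linear one; once this is established the rest is bookkeeping. I expect the only mildly delicate points to be the convergence of the infinite sums defining $\sR_L(\iota(x))$ and the Bianchi identity, both guaranteed by completeness of $L$, and the verification that $\omega(x)$ is well defined, handled as above.
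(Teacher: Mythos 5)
Your proposal is correct and follows essentially the same route as the paper: pick a lift of $x$, use the abelian-ideal property to show the curvature is affine-linear along $K$ (so $\sR_L(y+k)=\sR_L(y)+q_1(k)$ for $k\in K^1$), verify the curvature lies in $Z^2(K)$ via the Bianchi identity (which the paper derives inline from $Q^2=0$), and read off the lifting criterion and the $Z^1(K)$-torsor structure. The only cosmetic difference is that you fix a linear section $\iota$ of $L\to M$ where the paper works with an arbitrary lift and compares two of them directly.
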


\begin{proof} We denote by $q_n:L[1]^{\odot n}\rh L[1]$ the Taylor coefficients of the $L_\infty$ structure on $L$. Given $x\in\MC(M)$, let $y\in L^1$ be an arbitrary (not necessarily Maurer-Cartan) lifting of $x$: then $\mathcal{R}_L(y)=\sum_{n\geq1} \frac{1}{n!}q_n(y^{\odot  n})\in Z^2(K)$ (see Subsection \ref{subs: preliminaries} for the notation). In fact, since $x\in\MC(M)$ it is clear that $\mathcal{R}_L(y)\in K$, and since $K\subset L$ is an abelian ideal we see that
	\begin{eqnarray*} q_1(\mathcal{R}_L(y))=\sum_{n\geq2}\frac{1}{n!}q_1q_n(y^{\odot n})&=&-\sum_{n\geq2}\frac{1}{n!}\sum_{i=2}^n\frac{n!}{(i-1)!(n-i+1)!}q_i(q_{n-i+1}(y^{\odot n-i+1})\odot y^{\odot i-1})\\ 
	&=&-\sum_{i\geq2}\frac{1}{(i-1)!}q_i(\mathcal{R}_L(y)\odot y^{\odot i-1})=0
\end{eqnarray*}
	If $\widetilde{y}$ is another lifting of $x$ and $n\geq2$, then \[q_n(y^{\odot n})-q_n(\widetilde{y}^{\odot n})=\sum_{i=0}^{n-1}q_n(y^{\odot i}\odot(y-\widetilde{y})\odot\widetilde{y}^{\odot n-i-1})=0,\] as $y-\widetilde{y}\in K$ and $K\subset L$ is an abelian ideal, thus
	\begin{equation}\label{eq:R(x-x')}\mathcal{R}_L(y)-\mathcal{R}_L(\widetilde{y})=q_1(y-\widetilde{y}).\end{equation}
	This shows that the map $o:\MC(M)\to H^2(K)$ sending $x$ to the cohomology class of $\mathcal{R}_L(y)$, where $y\in L^1$ an arbitrary lifting of $x$, is well defined. If $o(x)=0$ then $\mathcal{R}_L(y)=q_1(z)$ for some $z\in K^1$, and \eqref{eq:R(x-x')} implies that $\mathcal{R}_L(y-z)=0$: thus $x$ admits a Maurer-Cartan lifting, and the converse is obvious. Finally, the last statement also follows immediately from Equation \eqref{eq:R(x-x')}.\end{proof}

\begin{remark}\label{rem:naturalityofobstructions} The obstruction map defined in the previous proposition is natural with respect to strict morphisms between central extensions of complete $L_\infty[1]$ algebras, that is, the datum of continuous strict morphisms between the bases, the fibers and the total spaces making the obvious diagram commutative: this is immediate by construction.\end{remark}

In the following proposition \ref{prop:delignevsexactsequences} we will prove an analog result for $\Del_\infty(-)$. First we recall the following observation, due to Getzler, showing that the functor $\Del_\infty(-)$ can be regarded as a non-abelian analogue of the Dold-Kan functor, cf. the introduction of \cite{GetzlerLie}.
\newcommand{\sMC}{\mathcal{MC}}

\begin{remark}\label{ex:deligneabeliancase} Let $(L,q_1,0,\ldots,0,\ldots)$ be an abelian $L_\infty$ algebra. Since extension of scalars and homotopy transfer send abelian $L_\infty$ structures to abelian $L_\infty$ structures, every $C^*(\Delta_n;L)$ is an abelian $L_\infty$ algebra, and $\Del_\infty(L)$ is the simplicial vector space $\Del_\infty(L)_n=Z^1(C^*(\Delta_n;L))$, where $Z^1(-)$ is the functor of $1$-cocycles. A direct inspection shows that under the Dold-Kan correspondence (cf. \cite{Weibel}) $\Del_\infty(L)$ goes into the $0$-truncation of the complex $(L[1],q_1)$
	\[\xymatrix{\cdots\ar[r]^-{q_1}&L^{-1}\ar[r]^-{q_1}&L^{0}\ar[r]^-{q_1}&Z^1(L)}.\]
	In particular, by \cite{Weibel}, Theorem 8.4.1, we have that $\pi_i(\Del_\infty(L))\cong H^{1-i}(L)$ for all $i\geq0$ and (when $i\ge1$) all base points $x\in\MC(L)=Z^1(L)$.\end{remark}

\begin{definition} Given a complete $L_\infty$ algebra $L$, we denote by $\sMC(L)$ the set $\sMC(L):=\pi_0(\Del_\infty(L))$ of connected components of  $\Del_\infty(L)$.\end{definition}

\begin{proposition}\label{prop:delignevsexactsequences}  Given $0\to K\to L\to M\to 0$ a central extension of complete $L_\infty$ algebras, there is a right principal action of the simplicial abelian group $\Del_\infty(K)$ on the simplicial set $\Del_\infty(L)$. Moreover, there is an obstruction map $o:\sMC(M)\to H^2(K)$ such that the kernel of the obstruction coincides with the image of $\sMC(L)\to\sMC(M)$. Finally, if we denote by $\Del_\infty(M)^L$ the Kan subcomplex of $\Del_\infty(M)$ consisting of connected components in $\op{Ker}\,o$, then $\Del_\infty(L)\to \Del_\infty(M)^L$ is isomorphic to the principal fibration associated to the action of $\Del_\infty(K)$ on $\Del_\infty(L)$, as in \cite[\S 18]{May}.\end{proposition}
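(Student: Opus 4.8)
The plan is to reduce everything to a levelwise statement and then feed it into the standard simplicial machinery. First I would observe that applying $C^*(\Delta_n;-)$ to the given extension yields, for each $n$, a central extension $0\to C^*(\Delta_n;K)\to C^*(\Delta_n;L)\to C^*(\Delta_n;M)\to 0$ of complete $L_\infty$ algebras. Exactness and surjectivity are preserved because, at the level of underlying graded spaces, $C^*(\Delta_n;-)$ amounts to tensoring with the finite-dimensional cochain algebra $C^*(\Delta_n;\K)$; that the kernel $C^*(\Delta_n;K)$ is an abelian ideal follows exactly as in Remark~\ref{ex:deligneabeliancase}, since in the tree formula for the transferred brackets any vertex fed an input from the ideal $\widehat\Omega^*(\Delta_n;K)$ already vanishes. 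The right action is then defined levelwise by $(y,z)\mapsto y+z$ for $y\in\Del_\infty(L)_n=\MC(C^*(\Delta_n;L))$ and $z\in\Del_\infty(K)_n=Z^1(C^*(\Delta_n;K))$; since $y$ and $y+z$ have the same image in $C^*(\Delta_n;M)$, formula~\eqref{eq:R(x-x')} gives $\mathcal{R}_L(y+z)=\mathcal{R}_L(y)+q_1(z)=0$, so the action is well defined, and it is visibly simplicial and free. That $\Del_\infty(K)$ is a simplicial abelian group is part of Remark~\ref{ex:deligneabeliancase}.

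Next I would produce the obstruction map on $\sMC(M)$. Taking $n=0$ in Proposition~\ref{prop:obstructionforMC} (where $C^*(\Delta_0;K)=K$) gives a vertexwise obstruction $o\colon\MC(M)\to H^2(K)$, and I claim it is constant on connected components. Indeed, for an edge $\xi\in\Del_\infty(M)_1$, naturality of obstructions (Remark~\ref{rem:naturalityofobstructions}) applied to the two vertex inclusions $e_0,e_1\colon\Delta_0\to\Delta_1$ shows that its endpoints carry the obstructions $e_0^*o_1(\xi)$ and $e_1^*o_1(\xi)$; since $\Delta_1$ is contractible the restrictions $e_0^*,e_1^*\colon H^2(C^*(\Delta_1;K))\to H^2(K)$ agree (both invert $\pi^*$), so the two endpoints carry the same obstruction. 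Thus $o$ descends to $o\colon\sMC(M)\to H^2(K)$. The equality $\op{Ker}\,o=\operatorname{Im}(\sMC(L)\to\sMC(M))$ is then immediate from Proposition~\ref{prop:obstructionforMC}: a vertex lifts to $\Del_\infty(L)_0=\MC(L)$ exactly when its obstruction vanishes, and hitting a component in the image is the same as lifting one of its vertices.

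To finish I would identify the quotient simplicial set. A simplex $x\in\Del_\infty(M)_n$ lies in the image of $\Del_\infty(L)$ iff $o_n(x)=0$ in $H^2(C^*(\Delta_n;K))$; applying naturality to a vertex inclusion $e_v\colon\Delta_0\to\Delta_n$ and using again that $e_v^*$ is an isomorphism by contractibility of $\Delta_n$, this happens iff the obstruction of a vertex of $x$ vanishes, iff the component of $x$ lies in $\op{Ker}\,o$. Hence the image is precisely $\Del_\infty(M)^L$, a Kan subcomplex as a union of components of the Kan complex $\Del_\infty(M)$ (Corollary~\ref{th:surjective=Kanfibration}). By the affine-space statement of Proposition~\ref{prop:obstructionforMC}, each fiber of $\Del_\infty(L)_n\to\Del_\infty(M)^L_n$ is a single free $\Del_\infty(K)_n$-orbit, so the induced map $\Del_\infty(L)/\Del_\infty(K)\to\Del_\infty(M)^L$ is a simplicial isomorphism. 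Combined with freeness of the action, this exhibits $\Del_\infty(L)\to\Del_\infty(M)^L$ as the principal $\Del_\infty(K)$-fibration associated to the action, in the sense of \cite[\S18]{May}.

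The part I expect to require the most care is the comparison between the ``large'' lifting obstruction in $H^2(C^*(\Delta_n;K))$, which governs the levelwise geometry, and the ``small'' obstruction in $H^2(K)$ that defines $\Del_\infty(M)^L$. Everything hinges on the contractibility of the simplices $\Delta_n$, which forces every vertex restriction $e_v^*$ to be the same isomorphism on $H^2$; it is exactly this that makes the levelwise vanishing of $o_n$ equivalent to the componentwise vanishing of $o$, and hence makes the image of $\Del_\infty(L)$ coincide with the prescribed subcomplex $\Del_\infty(M)^L$.
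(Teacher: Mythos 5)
Your proposal is correct and follows essentially the same route as the paper: apply $C^*(\Delta_n;-)$ to get a levelwise central extension, invoke Proposition~\ref{prop:obstructionforMC} for the torsor structure over $Z^1(C^*(\Delta_n;K))=\Del_\infty(K)_n$, and use naturality of obstructions together with the fact that every vertex restriction $H(e_i^*)\colon H^2(C^*(\Delta_n;K))\to H^2(K)$ is the single isomorphism inverse to $H(\pi^*)$ to match the levelwise obstruction with the vertexwise one. The only difference is presentational: you spell out the abelian-ideal property of $C^*(\Delta_n;K)$ and the constancy of $o$ along edges, which the paper leaves implicit.
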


\begin{proof} We claim that a Maurer-Cartan cochain $\alpha\in\Del_\infty(M)_n$ lifts to $\beta\in\Del_\infty(L)_n$ if and only if, for some (and then for all) $i=0,\ldots,n$, its evaluation $x:=e_i^\ast(\alpha)\in\MC(M)$ at the $i$-th vertex $e_i:\Delta_0\to\Delta_n$ lifts to a Maurer-Cartan element $y\in\MC(L)$. In fact, we have a morphism of central extensions of complete $L_\infty$ algebras
	\[ \xymatrix{0\ar[r]& C^*(\Delta_n; K)\ar[r]\ar[d]_{e_i^\ast}& C^*(\Delta_n;L)\ar[r]\ar[d]_{e_i^\ast}&
		C^*(\Delta_n;M)\ar[r]\ar[d]_{e_i^\ast} & 0 \\ 0\ar[r] & K\ar[r] & L\ar[r] & M\ar[r] & 0} \]
and by Remark \ref{rem:naturalityofobstructions} $H(e^\ast_i)(o(\alpha))=o(e_i^\ast(\alpha))=o(x)$. Since $H(e_i^\ast):H^2(C^*(\Delta_n;K))\to H^2(K)$ is an isomorphism (which, being an inverse to $H(\pi^\ast):H^2(K)\to H^2(C^*(\Delta_n;K))$, $\pi$ the final morphism, doesn't depend on $i$), the claim follows. By Proposition \ref{prop:obstructionforMC}, the abelian group $\Del_\infty(K)_n=Z^1(C^*(\Delta_n;K))$ acts on the right on the set of Maurer-Cartan liftings of $\alpha$, when this is not empty. 

The above shows that the obstruction map $o:\MC(M)\to H^2(K)$ from Proposition \ref{prop:obstructionforMC} factors through the projection $\MC(M)\to\sMC(M)$, and the resulting $o:\sMC(M)\to H^2(K)$ has the required properties. Now the rest of the proposition follows easily.\end{proof}

We denote by $\underline{\mathbf{SSet}}(-,-):\mathbf{SSet}^{op}\times\mathbf{SSet}\to\mathbf{SSet}$ the simplicial mapping space functor, sending simplicial sets $X,Y$ to the simplicial set $\underline{\mathbf{SSet}}(X,Y)_n=\mathbf{SSet}(\Delta_n\times X,Y)$. The reader should compare the following theorem with \cite[Theorem 5.5]{BerglundLie} and \cite[Theorem~2.20]{BrSz}: in particular, notice that we are not putting any restriction on the simplicial set $X$ and the complete $L_\infty$ algebra $L$. We also remark that our proof is different from the ones in the above references. Here and in what follows, when we talk about weak equivalences between simplcial sets we are referring to the standard model category structure on $\mathbf{SSet}$, see \cite{BouKan,Jardine,HirMC}.

\begin{theorem}\label{th:Delignevsmappingspaces} There is a natural weak equivalence $\Del_\infty(C^*(-;-))\xrightarrow{\sim}\underline{\mathbf{SSet}}(-,\Del_\infty(-))$ of functors $\mathbf{SSet}^{op}\times\mathbf{\widehat{L_\infty}}\to\mathbf{SSet}$.\end{theorem}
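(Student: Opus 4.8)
The plan is to construct the natural transformation objectwise, verify it is a weak equivalence at each object, and reduce the general case to something I can check by the tools already developed. The source of the map is clear from Lemma \ref{prop:MC(C*(-;-))}: I already have a natural isomorphism $\MC(C^*(-;-))\cong\mathbf{SSet}(-,\Del_\infty(-))$ of functors into $\mathbf{Set}$. To get a map of \emph{simplicial} sets, I would apply this isomorphism in each simplicial degree. Concretely, $\Del_\infty(C^*(X;L))_n=\MC\bigl(C^*(\Delta_n;C^*(X;L))\bigr)$, and I would use a natural strict $L_\infty$ isomorphism $C^*(\Delta_n;C^*(X;L))\cong C^*(\Delta_n\times X;L)$ (which follows from Remark \ref{rem:limits} together with the fact that $\Delta_n\times X=\dlim_{\Delta X}(\Delta_n\times\Delta_m)$, reducing both sides to the same limit of cochains on products of simplices). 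Combining this with Lemma \ref{prop:MC(C*(-;-))} applied to the simplicial set $\Delta_n\times X$ yields
\[ \Del_\infty(C^*(X;L))_n=\MC\bigl(C^*(\Delta_n\times X;L)\bigr)\cong\mathbf{SSet}(\Delta_n\times X,\Del_\infty(L))=\underline{\mathbf{SSet}}(X,\Del_\infty(L))_n. \]
I would check that this is compatible with the simplicial structure maps, so that it assembles into a morphism of simplicial sets, natural in $X$ and $L$. In fact this shows the map is an \emph{isomorphism} of simplicial sets, not merely a weak equivalence, once the coherence of the identifications is verified. So the real content is the naturality and the verification that these degreewise bijections commute with face and degeneracy operators, which in turn rests on the naturality of Dupont's contraction packaged in the functor $\op{Dup}$ and on the fact that $C^*(-;-)$ and $\MC(-)$ commute with small limits.

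**Next I would address** the subtlety that the cochain complex $C^*(\Delta_n\times X;L)$ carries its \emph{own} transferred $L_\infty$ structure via Dupont's contraction on the product simplicial set, whereas the iterated construction $C^*(\Delta_n;C^*(X;L))$ transfers twice. These need not coincide on the nose as $L_\infty$ algebras, only up to a natural $L_\infty$ isomorphism or weak equivalence. This is where I expect the main obstacle to lie: comparing the one-step Dupont transfer for the product with the two-step iterated transfer. The clean way around this is to invoke the compatibility of homotopy transfer with limits from Remark \ref{rem:transfer as a functor} and the fact, recorded in Remark \ref{rem:limits}, that $C^*(-;-)$ commutes with small limits in \emph{both} variables; writing $\Delta_n\times X$ as the colimit $\dlim_{\Delta(\Delta_n\times X)}\Delta_k$ presents both the one-step and the iterated cochain functors as the \emph{same} limit of the elementary pieces $C^*(\Delta_k;L)$, so the two structures are canonically identified by a strict $L_\infty$ isomorphism rather than merely a weak equivalence. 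I would spell this out carefully, since it is the crux, and here the emphasis in the introduction on Reedy model category methods suggests the author's actual argument is more structural—building the equivalence by induction along skeleta of $X$ and using that $\Del_\infty(-)$ sends the relevant pushouts to homotopy pullbacks.

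**Finally,** as a fallback in case the strict identification above does not hold on the nose, I would prove the weaker statement that the comparison map is a weak equivalence by reducing to the skeletal filtration of $X$. One writes $X=\dlim_m\op{sk}_m X$ and attaches nondegenerate simplices one boundary at a time; each attachment is a pushout along $\partial\Delta_n\hookrightarrow\Delta_n$. Since $C^*(-;L)$ turns such a pushout into a pullback of complete $L_\infty$ algebras (by Remark \ref{rem:limits}), and since the restriction $C^*(\Delta_n;L)\to C^*(\partial\Delta_n;L)$ is surjective in nonpositive degrees, Corollary \ref{th:surjective=Kanfibration} shows $\Del_\infty$ carries it to a Kan fibration. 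By Lemma \ref{lem:Delvslimits}, $\Del_\infty$ commutes with the resulting limits, so $\Del_\infty(C^*(X;L))$ is computed as a homotopy limit of a tower of fibrations whose entries are $\Del_\infty(C^*(\Delta_n;L))\cong\underline{\mathbf{SSet}}(\Delta_n,\Del_\infty(L))$ and $\Del_\infty(C^*(\partial\Delta_n;L))\cong\underline{\mathbf{SSet}}(\partial\Delta_n,\Del_\infty(L))$. On the mapping-space side the analogous tower is exactly the Reedy tower computing $\underline{\mathbf{SSet}}(X,\Del_\infty(L))$, and the base case $X=\Delta_n$ is the already-established isomorphism. An inductive five-lemma-style comparison on homotopy groups, using naturality of the map and the long exact sequences of the two fibration towers, then upgrades the degreewise bijection to a weak equivalence over all of $X$. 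I expect that the honest difficulty throughout is bookkeeping the two competing transferred structures coherently, and that everything else is a formal consequence of the limit-preservation properties already in hand.
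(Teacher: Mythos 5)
Your central claim---that $C^*(\Delta_n;C^*(X;L))$ and $C^*(\Delta_n\times X;L)$ are related by a natural \emph{strict} $L_\infty$ isomorphism, so that the comparison map is an isomorphism of simplicial sets---is false, and this breaks the proposal at its root. Writing $\Delta_n\times X=\dlim_{\Delta X}(\Delta_n\times\Delta_m)$ reduces the two sides to limits of \emph{different} diagrams: one has entries $C^*(\Delta_n\times\Delta_m;L)$, the other $C^*(\Delta_n;C^*(\Delta_m;L))$, and these are not isomorphic even as graded spaces. Already for $n=m=1$ the nondegenerate cochains on $\Delta_1\times\Delta_1$ have dimensions $(4,5,2)$ over $L$, while $C^*(\Delta_1)\otimes C^*(\Delta_1)$ has dimensions $(4,4,1)$: this is the usual Eilenberg--Zilber phenomenon, a quasi-isomorphism but not an isomorphism. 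Consequently you have not actually constructed the comparison map, and your fallback plan cannot start, since it presupposes a map whose construction you only gave via the (false) strict identification. The paper's construction is genuinely different: it defines the map on $n$-simplices as the composite $\MC(C^*(\Delta_n;C^*(X;L)))\xrightarrow{\MC(F)}\MC(\widehat{\Omega}^*(\Delta_n;\widehat{\Omega}^*(X;L)))\xrightarrow{\MC(p_1^\ast\wedge p_2^\ast)}\MC(\widehat{\Omega}^*(\Delta_n\times X;L))\xrightarrow{\MC(G)}\MC(C^*(\Delta_n\times X;L))$, passing through polynomial forms precisely because the wedge product $p_1^\ast\wedge p_2^\ast$ exists there but has no strict analogue on cochains; $F$ and $G$ are the transfer $L_\infty$ morphisms along Dupont's contraction. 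This map is a weak equivalence but not, in general, a bijection in each simplicial degree.

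A second, independent gap is the base case $X=\Delta_m$, which you dismiss as ``the already-established isomorphism.'' It is neither established nor an isomorphism, and it is the analytic heart of the theorem: the paper reduces it (by two-out-of-three, using that $\underline{\mathbf{SSet}}(\Delta_m,\Del_\infty(L))\simeq\Del_\infty(L)$ for the Kan complex $\Del_\infty(L)$) to showing that $\pi^\ast:\Del_\infty(L)\to\Del_\infty(C^*(\Delta_m;L))$ is a weak equivalence, which is proved by induction along the filtration $F^\bullet L$ using the central extensions $0\to F^pL/F^{p+1}L\to L/F^{p+1}L\to L/F^pL\to 0$, the principal-fibration comparison of Proposition \ref{prop:delignevsexactsequences}, the abelian (Dold--Kan) case of Remark \ref{ex:deligneabeliancase}, and a limit of fibrant towers. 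Nothing in your proposal substitutes for this. Your final reduction from general $X$ to simplices via a tower of fibrations is sound in outline and close in spirit to the paper's argument (the paper uses a Reedy-fibrant diagram over $(\Delta X)^{op}$ and the fact that $\ilim$ is right Quillen there, rather than the skeletal filtration), and the fibrancy inputs you cite are the right ones; but without a correctly constructed map and without the $X=\Delta_m$ case the proof does not go through.
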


\begin{proof} For a complete $L_\infty$ algebra $L$, a simplicial set $X$ and an integer $n\geq0$, we define the required $\Del_\infty(C^*(X;L))_n\xrightarrow{}\underline{\mathbf{SSet}}(X,\Del_\infty(L))_n$ as the following composition 
	\[\Del_{\infty}(C^*(X;L))_n=\MC(C^*(\Delta_n;C^*(X;L)))\xrightarrow{\MC(F)}\]
	\[\xrightarrow{\MC(F)}\MC(\widehat{\Omega}^*(\Delta_n;\widehat{\Omega}^*(X;L)))\xrightarrow{\MC(p_1^\ast\wedge p_2^\ast)}\MC(\widehat{\Omega}^*(\Delta_n\times X;L))\xrightarrow{\MC(G)}\]
	\[\xrightarrow{\MC(G)}\MC(C^*(\Delta_n\times X;L))\xrightarrow{\cong}\mathbf{SSet}(\Delta_n\times X,\Del_\infty(L))=\underline{\mathbf{SSet}}(X,\Del_\infty(L))_n, \]
	where the $L_\infty$ morphisms
	\[ F:C^*(\Delta_n;C^*(X;L))\to\widehat{\Omega}^*(\Delta_n;C^*(X;L))\to\widehat{\Omega}^*(\Delta_n;\widehat{\Omega}^*(X; L)),\]
	and $G:\widehat{\Omega}^*(\Delta_n\times X;L)\to C^*(\Delta_n\times X; L)$ are induced via homotopy transfer along Dupont's contractions, $p_1$ and $p_2$ are the projections of $\Delta_n\times X$ onto the first and second factor respectively, and finally $p_1^\ast\wedge p_2^\ast:\widehat{\Omega}^*(\Delta_n;\widehat{\Omega}^*(X; L))\to\widehat{\Omega}^*(\Delta_n\times X; L)$ is induced by
	\[\Omega^*(\Delta_n)\ten\Omega^*(X)\xrightarrow{p_1^\ast\ten p_2^\ast}\Omega^*(\Delta_n\times X)^{\ten2}\xrightarrow{\wedge}\Omega^*(\Delta_n\times X),\]
	where $\wedge:\Omega^*(\Delta_n\times X)^{\ten2}\xrightarrow{}\Omega^*(\Delta_n\times X)$ is the wedge product of forms. The fact that this defines a morphism of simplicial sets, as well as the fact that this is natural in $X$ and $L$, are both consequences of Lemma \ref{lem:transfer}.
	
	Having defined the natural $\Del_\infty(C^*(X;L))\xrightarrow{}\underline{\mathbf{SSet}}(X,\Del_\infty(L))$, we have to check that this is a weak equivalence. The rest of the proof shall depend on some general results from model category theory, for which we refer to \cite{HirMC}.
	Pull back from the terminal morphism $\pi:X\to\Delta_0$ induces
	\[\Del_\infty(L)=\Del_\infty(C^*(\Delta_0;L))\to\Del_\infty(C^*(X;L))\qquad\mbox{and}\] \[\Del_\infty(L)=\underline{\mathbf{SSet}}(\Delta_0,\Del_\infty(L))\to\underline{\mathbf{SSet}}(X,\Del_\infty(L)),\]
	and the following diagram is commutative
	\[\xymatrix{\Del_\infty(C^*(X;L))\ar[rr]&&\underline{\mathbf{SSet}}(X,\Del_\infty(L))\\
		& \Del_\infty(L)\ar[ul]\ar[ur] & }\]
	When $X=\Delta_m$ is a simplex, since $\pi:\Delta_m\to\Delta_0$ is a weak equivalence and $\Del_\infty(L)$ is a Kan complex, it follows from \cite[Corollary 9.3.3 (2)]{HirMC} that $\Del_\infty(L)\to\underline{\mathbf{SSet}}(\Delta_m,\Del_\infty(L))$ is a weak equivalence. We claim that $\Del_\infty(L)\to\Del_\infty(C^*(\Delta_m;L))$ is a weak equivalence for all $m\geq0$, thus, by two out of three, so is $\Del_\infty(C^*(\Delta_m;L))\xrightarrow{}\underline{\mathbf{SSet}}(\Delta_m,\Del_\infty(L))$.
	
	When $L$ is abelian so is $C^*(\Delta_m;L)$, for all $m\geq0$, and the claim follows from Remark \ref{ex:deligneabeliancase}. We suppose inductively that the claim has been proved for the nilpotent $L_\infty$ algebra $L/F^pL$. We notice that in the diagram
	\[\xymatrix{0\ar[r]&C^*(\Delta_m;F^pL/F^{p+1}L)\ar[r]&C^*(\Delta_m;L/F^{p+1}L)\ar[r]& C^*(\Delta_m;L/F^pL)\ar[r]&0\\
		0\ar[r]&F^{p}L/F^{p+1}L\ar[r]\ar[u]^{\pi^\ast}&L/F^{p+1}L\ar[r]\ar[u]^{\pi^\ast}&L/F^pL\ar[r]\ar[u]^{\pi^\ast}&0}\]
	the rows are central extensions of complete $L_\infty$ algebras. We will use Proposition \ref{prop:delignevsexactsequences}: recall from its claim the definition of $\Del_\infty(L/F^{p}L)^{L/F^{p+1}L}$, $\Del_\infty(C^*(\Delta_m;L/F^{p}L))^{C^*(\Delta_m;L/F^{p+1}L)}$. By the inductive hypothesis $\Del_\infty(L/F^pL)\to\Del_\infty(C^\ast(\Delta_m;L/F^pL))$ is a weak equivalence, which restricts to a weak equivalence $\Del_\infty(L/F^{p}L)^{L/F^{p+1}L}\to\Del_\infty(C^*(\Delta_m;L/F^{p}L))^{C^*(\Delta_m;L/F^{p+1}L)}$ by naturality of the obstructions: but then $\Del_\infty(L/F^{p+1}L)\to\Del_\infty( C^*(\Delta_m;L/F^{p+1}L))$ is a morphism of principal fibrations inducing weak equivalences between the bases and the fibres, hence a weak equivalence, concluding the inductive step. The claim for $L=\ilim\,L/F^pL$  follows since \[\ilim\Del_\infty(L/F^pL)=\Del_\infty(L)\to\Del_\infty(C^*(\Delta_m;L))=
	\ilim \Del_\infty(C^*(\Delta_m;L/F^pL)) \]
	is the limit of a weak equivalence between fibrant towers of simplicial sets, cf. \cite[Proposition 15.10.12 (2)]{HirMC}.
	
	Finally, denoting by $(\Delta X)^{op}$ the opposite of the small category of simplices of $X$, we claim that the morphism $\Del_\infty(C^*(X;L))\to\underline{\mathbf{SSet}}(X,\Del_\infty(L))$ is the limit 
	\[\ilim_{\{ \sigma:\Delta_n\to X\}\in(\Delta X)^{op}}\,\Del_\infty(C^*(\Delta_{n};V))\to \ilim_{\{ \sigma:\Delta_n\to X\}\in(\Delta X)^{op}}\,\underline{\mathbf{SSet}}(\Delta_{n},\Del_\infty(L))\]
	of a weak equivalence between Reedy fibrant diagrams of simplicial sets over $(\Delta X)^{op}$. Since the Reedy category $(\Delta X)^{op}$ has cofibrant constants \cite[Proposition 15.10.4 (2)]{HirMC}, the limit functor $\ilim:\mathbf{SSet}^{(\Delta X)^{op}}\to\mathbf{SSet}$ is a right Quillen functor \cite[Theorem 15.10.8 (1)]{HirMC}, and in particular it sends weak equivalences between fibrant objects to weak equivalences \cite[Proposition 8.5.7 (2)]{HirMC}: this concludes the proof that  $\Del_\infty(C^*(X;L))\to\underline{\mathbf{SSet}}(X,\Del_\infty(L))$ is a weak equivalence.
	
    To check that $(\Delta X)^{op}\to\mathbf{SSet}:\{ \sigma:\Delta_n\to X\} \to \Del_\infty(C^*(\Delta_n;L))$ is a Reedy fibrant diagram in the Reedy model category $\mathbf{SSet}^{(\Delta X)^{op}}$, we use Lemma \ref{lem:Delvslimits} to deduce that the matching morphism (cf. \cite[Definition 15.2.5]{HirMC}) at a simplex $\sigma:\Delta_n\to X$ identifies with the restriction 
	\[ \Del_\infty(C^*(\Delta_{n};L))\to\Del_\infty(C^*(\partial\Delta_{n};L)),\]
	which is a Kan fibration according to Theorem \ref{th:surjective=Kanfibration}. 
	
	Similarly, to check that $(\Delta X)^{op}\to\mathbf{SSet}:\{ \sigma:\Delta_n\to X\} \to\underline{\mathbf{SSet}}(\Delta_{n},\Del_\infty(L))$ is a Reedy fibrant diagram in $\mathbf{SSet}^{(\Delta X)^{op}}$, we observe that the matching morphism at $\sigma:\Delta_n\to X$ identifies with the restriction 
	\[ \underline{\mathbf{SSet}}(\Delta_{n},\Del_\infty(L))\to\underline{\mathbf{SSet}}(\partial\Delta_{n},\Del_\infty(L)), \]
	which is a Kan fibration according to \cite[Proposition 9.3.1 (1)]{HirMC}. \end{proof}

\section{Descent of Deligne-Getzler $\infty$-groupoids}

\newcommand{\scs}{\underrightarrow{\Delta}}

\subsection{Totalization and homotopy limits} The aim of this subsection is to review the construction of totalization and homotopy limit functors for complete $L_\infty$ algebras. Our constructions are straightforward adaptations of the standard ones for simplicial sets (see e.g. \cite{BouKan,Jardine}), where, in light of Theorem \ref{th:Delignevsmappingspaces}, we replace each occurence of the simplicial mapping space functor $\underline{\mathbf{SSet}}(X,-):\mathbf{SSet}\to\mathbf{SSet}$ by the corresponding functor of non-degenerate cochains $C^*(X;-):\widehat{\mathbf{L_\infty}}\to\widehat{\mathbf{L_\infty}}$.

 As usual, we denote by $\Delta$ the \emph{cosimplicial indexing category}, whose objects are the finite ordinals $\underline{n}=\{ 0<\cdots< n\}$ and whose morphism are the order preserving maps $\underline{m}\to\underline{n}$. Given a category $\mathbf{C}$, the category of \emph{cosimplicial objects} in $\mathbf{C}$ is the category $\mathbf{C}^{\Delta}$, whose objects are the functors $\Delta\to\mathbf{C}$ and arrows the natural transformations.

\begin{definition} Given a cosimplicial complete $L_\infty$ algebra $L_\bullet\in \widehat{\mathbf{L_\infty}}^\Delta$, with cofaces $\partial^j:L_{n-1}\to L_{n}$ and codegeneracies $s^j:L_{n+1}\to L_{n}$, $j=0,\ldots,n$,  the $n$-th \emph{matching space} is the complete $L_\infty$ algebra	\[ M_n(L_\bullet) =\{ (x_0,\ldots, x_n)\in L_n\times\cdots\times L_n\,\,\operatorname{s.t.}\,\, s^i(x_j) = s^{j-1}(x_i),\,\,\forall\,\,0\leq i<j\leq n \}.\]
	There is a natural morpshim $L_{n+1}\to M_n(L_\bullet):x\to(s^0(x),\ldots,s^n(x))$. 
\end{definition}

\begin{remark}\label{rem:reedyfibrant} Every cosimplicial complete $L_\infty$ algebra $L_\bullet$ is \emph{Reedy fibrant}, in the sense that the natural $L_{n+1}\to M_n(L_\bullet)$ is surjective for all $n\geq0$. The proof of this claim is standard, see \cite[p. 276]{BouKan} and \cite[Theorem 17.1]{May}. Given $(x_0,\ldots,x_n)\in M_n(L_\bullet)$, we put $y_n=\partial^n(x_n)$, and for $0\leq r<n$ we put inductively $y_r = \partial^r(x_r-s^r(y_{r+1})) + y_{r+1}$. Then an easy computation, using the cosimplicial relations, shows that $s^i(y_r) = x_i$ for all $r\leq i\leq n$, and in particular $y_0\in L_{n+1}$ maps onto $(x_0,\ldots, x_n)$ under the matching morphism $L_{n+1}\to M_n(L_\bullet)$. 
\end{remark}

Our first objective is to define the totalization functor $\Tot(-):\widehat{\mathbf{L_\infty}}^\Delta\to\widehat{\mathbf{L_\infty}}$. In the following definition, given a cosimplcial complete $L_\infty$ algebra, we denote by \[ \partial^j_\ast:C^*(\Delta_{n-1};L_{n-1})\to C^*(\Delta_{n-1};L_{n}),\qquad s^j_\ast:C^*(\Delta_{n+1};L_{n+1})\to C^*(\Delta_{n+1};L_n)\] 
$j=0,\ldots,n$, the push-forwards by the cofaces and the codegeneracies of $L_\bullet$, and by \[ \delta_j^*:C^*(\Delta_n;L_n)\to C^*(\Delta_{n-1};L_n),\qquad \sigma^*_j:C^*(\Delta_n;L_n)\to C^*(\Delta_{n+1};L_n)\] 
$j=0,\ldots,n$, the pull-backs by the cofaces and the codegeneracies of the standard cosimplicial simplex $\Delta_\bullet\in\mathbf{SSet}^\Delta$.

\begin{definition}\label{def:cos} Given a cosimplcial complete $L_\infty$ algebra $L_\bullet\in\widehat{\mathbf{L_\infty}}^\Delta$, its \emph{totalization} $\Tot(L_\bullet)$ is the complete $L_\infty$ algebra 
\[  \Tot(L_\bullet)=\left\{ (\alpha_0,\ldots,\alpha_n,\ldots)\in\prod_{n\geq0}C^*(\Delta_n;L_n)\,\,\operatorname{s.t.}\,\,\de^j_\ast(\alpha_{n-1}) =\delta^*_j(\alpha_n),\,s^j_*(\alpha_{n+1})=\sigma^*_j(\alpha_n) \right\}.  \] 
In other words, $\Tot(L_\bullet)$ is the universal object in $\widehat{\mathbf{L}_\infty}$ equipped with maps $\Tot(L_\bullet)\to C^*(\Delta_n;L_n)$, for all $n\geq0$, such that for every morphism $\underline{i}\to\underline{j}$ in $\Delta$, the corresponding diagram
\begin{equation}\label{eq:diagramtot}
\xymatrix{\Tot(L_\bullet)\ar[r]\ar[d]&C^*(\Delta_j;L_j)\ar[d]\\C^*(\Delta_i;L_i)\ar[r]&
		C^*(\Delta_i;L_j)}\end{equation}
	is commutative. We can similarly define $L_\infty$ subalgebras $\Tot_k(L_\bullet)\subset\prod_{0\leq n\leq k}C^*(\Delta_n;L_n)$, $k\geq0$, by an analog universal property, where we require commutativity of the above diagram only for those morphisms $\underline{i}\to\underline{j}$ in $\Delta$ where $i,j\leq k$. 
	
	More formally, following Hinich \cite{Hinichdescent} we introduce a category $\sM$ (resp.: $\sM_{\leq k}$) whose objects are the arrows $\underline{i}\to\underline{j}$ in $\Delta$ (resp.: with $i,j\leq k$) and whose arrows $\left\{\underline{i}\rh\underline{j}\right\}\rh\left\{\underline{i'}\rh\underline{j'}\right\}$ are the factorizations $\left\{\underline{i'}\rh\underline{j'}\right\}=\left\{\underline{i'}\rh\underline{i}\rh\underline{j}\rh\underline{j'}\right\}$ in $\Delta$: then the above says that $\Tot(L_\bullet)$ (resp.: $\Tot_k(L_\bullet)$) is the limit of the functor $\sM\rh\widehat{\mathbf{L_\infty}}:\left\{\underline{i}\rh\underline{j}\right\}\rh C^*(\Delta_i;L_j)$ (resp.: of the restricition of this functor to $\sM_{\leq k}$). 
\end{definition}\begin{remark}\label{rem:cos} There is a natural isomorphism $\Tot(L_\bullet)=\ilim\Tot_k(L_\bullet)$. Moreover, obviously $\Tot_0(L_\bullet)=L_0$, and for every $k\geq1$ the natural projection $\Tot_k(L_\bullet)\to\Tot_{k-1}(L_\bullet)$ fits into  the following cartesian square in $\widehat{\mathbf{L_\infty}}$
	\begin{equation}\label{eq:cartesian}\xymatrix{\Tot_k(L_\bullet)\ar[r]\ar[d]& C^*(\Delta_k;L_k)\ar[d]\\\Tot_{k-1}(L_\bullet)\ar[r]&
		N_{k-1}} \end{equation}
where the complete $L_\infty$ algebra $N_{k-1}$ is the fiber product
\[ N_{k-1}:= C^*(\de\Delta_k;L_k)\times_{C^*(\de\Delta_k;M_{k-1}(L_\bullet))}C^*(\Delta_k;M_{k-1}(L_\bullet)).   \]
We notice that Remark \ref{rem:reedyfibrant} implies that the vertical arrows in the above cartesian diagram \eqref{eq:cartesian} are surjections.\end{remark}
In some situations codegeneracies don't play a significant role, and it is convenient to drop them altogether: this leads to the notion of semicosimplicial object in a category. We shall denote by $\scs\subset\Delta$ the \emph{semicosimplicial indexing category}, whose objects are once again the finite ordinals $\underline{n}$, but where the arrows are the \emph{injective} order preserving maps $\underline{m}\to\underline{n}$. Given a category $\mathbf{C}$, the category of \emph{semicosimplicial objects} in $\mathbf{C}$ is the category $\mathbf{C}^{\scs}$ with objects the functors $\scs\to\mathbf{C}$ and arrows the natural transformations. In other words, a semicosimplicial object in $\mathbf{C}$ is roughly the same as a cosimplicial object without the codegeneracies.

In the following definition, the morphisms $\partial^j_\ast:C^*(\Delta_{n-1};L_{n-1})\to C^*(\Delta_{n-1};L_{n})$ and $\delta_j^*:C^*(\Delta_{n};L_{n})\to C^*(\Delta_{n-1};L_{n})$ have the same meaning as before

\begin{definition}\label{def:scs} Given a semicosimplcial complete $L_\infty$ algebra $L_\bullet\in\widehat{\mathbf{L_\infty}}^{\scs}$, its \emph{totalization} $\Tot(L_\bullet)$ is the complete $L_\infty$ algebra 
	\[  \Tot(L_\bullet)=\left\{ (\alpha_0,\ldots,\alpha_n,\ldots)\in\prod_{n\geq0}C^*(\Delta_n;L_n)\,\,\operatorname{s.t.}\,\,\de^j_\ast(\alpha_{n-1}) =\delta^*_j(\alpha_n) \right\}.  \] 
	The complete $L_\infty$ algebra $\Tot(L_\bullet)$ satisfies an universal property analog to the previous one (where we require commutativity of the diagram \eqref{eq:diagramtot} only for those morphisms $\underline{i}\to\underline{j}$ in $\scs\subset\Delta$). The partial totalizations $\Tot_k(L_\bullet)$, $k\geq0$, are defined similarly as before. 
	
	Introducing the category $\underrightarrow{\sM}$ (resp.: $\underrightarrow{\sM}_{\leq k}$) whose objects are the arrows $\underline{i}\to\underline{j}$ in $\scs$ (resp.: with $i,j\leq k$) and whose arrows $\left\{\underline{i}\rh\underline{j}\right\}\rh\left\{\underline{i'}\rh\underline{j'}\right\}$ are the factorizations $\left\{\underline{i'}\rh\underline{j'}\right\}=\left\{\underline{i'}\rh\underline{i}\rh\underline{j}\rh\underline{j'}\right\}$ in $\scs$, then $\Tot(L_\bullet)$ is the limit of $\underrightarrow{\sM}\rh\widehat{\mathbf{L_\infty}}:\left\{\underline{i}\rh\underline{j}\right\}\rh C^*(\Delta_i;L_j)$. Similarly, $\Tot_k(L_\bullet)$ is the limit of the restrictions of the above functor to $\underrightarrow{\sM}_{\leq k}\subset\underrightarrow{\sM}$.
\end{definition}\begin{remark}\label{rem:scs} Once again, there is a natural isomorphism $\Tot(L_\bullet)=\ilim\Tot_k(L_\bullet)$. Furthermore, $\Tot_0(L_\bullet)=L_0$, and for every $k\geq1$ the natural projection $\Tot_k(L_\bullet)\to\Tot_{k-1}(L_\bullet)$ fits into  the cartesian square in $\widehat{\mathbf{L_\infty}}$
\begin{equation}\label{eq:cartesianscs}\xymatrix{\Tot_k(L_\bullet)\ar[r]\ar[d]& C^*(\Delta_k;L_k)\ar[d]\\\Tot_{k-1}(L_\bullet)\ar[r]&
	C^*(\de\Delta_k;L_k)} \end{equation}
\end{remark}

We finally come to the definition of homotopy limits in $\widehat{\mathbf{L}_\infty}$. 

\begin{definition}\label{def:replacement} Given a small category $\mathbf{S}$ and an $\mathbf{S}$-diagram $F:\mathbf{S}\to\widehat{\mathbf{L}_\infty}$ of complete $L_\infty$ algebras, its \emph{cosimplicial replacement} is the cosimplicial complete $L_\infty$ algebra
 \[ \Pi(L_\bullet):\quad \xymatrix{ \Pi(L_\bullet)_0
 	\ar@<2pt>[r]\ar@<-2pt>[r] & \Pi(L_\bullet)_1    \ar@<4pt>[r] \ar[r] \ar@<-4pt>[r] & \Pi(L_\bullet)_2 \ar@<6pt>[r] \ar@<2pt>[r] \ar@<-2pt>[r] \ar@<-6pt>[r]&\cdots}  \]
(where we omitted the codegeneracies for notational simplicity) whose complete $L_\infty$ algebra of $n$-simplices is
\[ \Pi(L_\bullet)_n = \prod_{i_0\xrightarrow{}\cdots\xrightarrow{} i_n} F(i_n) \]
where the product runs over the set of $n$-uplets of composable arrows in $\mathbf{S}$. 

The cofaces and the codegeneracies are defined as follows. The composition of the $j$-th coface $\partial^j:\Pi(F)_{n-1}\to\Pi(F)_n$, $j=1,\ldots,{n-1}$ (resp.: $j=0$, $j=n$), and the projection onto the factor $F(i_n)$ indexed by $i_0\xrightarrow{\phi_1}\cdots\xrightarrow{\phi_n} i_n$ coincides with the projection onto the factor $F(i_n)$ (resp.: $F(i_n)$, $F(i_{n-1})$) indexed by $i_0\xrightarrow{\phi_1}\cdots i_{j-1}\xrightarrow{\phi_{j+1}\circ\phi_j}i_{j+1}\cdots\xrightarrow{\phi_n} i_n$ (resp.: indexed by $i_1\xrightarrow{\phi_2}\cdots\xrightarrow{\phi_{n}} i_{n}$, $i_0\xrightarrow{\phi_1}\cdots\xrightarrow{\phi_{n-1}} i_{n-1}$), followed by the identity $\id_{F(i_n)}$ (resp.: followed by $\id_{F(i_n)}$, $F(\phi_n)$). The composition of the $j$-th codegeneracy $s^j:\Pi(F)_{n+1}\to\Pi(F)_n$, $j=0,\ldots,n$, and the projection onto onto the factor $F(i_n)$ indexed by $i_0\xrightarrow{\phi_1}\cdots\xrightarrow{\phi_n} i_n$ coincides with the projection onto the factor $F(i_n)$ indexed by $i_0\xrightarrow{\phi_1}\cdots i_{j}\xrightarrow{\id_{i_j}}i_{j}\cdots\xrightarrow{\phi_n} i_n$  followed by the identity of $F(i_n)$.
\end{definition}
\begin{definition} Given a small category $\mathbf{S}$, the \emph{homotopy limit} functor $\underleftarrow{\operatorname{holim}}(-):\widehat{\mathbf{L}_\infty}^{\mathbf{S}}\to\widehat{\mathbf{L}_\infty}$ is the composition of the cosimplicial replacement functor $\Pi(-):\widehat{\mathbf{L}_\infty}^{\mathbf{S}}\to\widehat{\mathbf{L}_\infty}^{\Delta}$ and the totalization functor $\Tot(-):\widehat{\mathbf{L}_\infty}^\Delta\to\widehat{\mathbf{L}_\infty}$.
\end{definition}
\begin{remark} As already remarked, we can repeat the previous discussion almost verbatim, replacing every instance of the word complete $L_\infty$ algebra by simplicial set, and every instance of the functor $C^*(X;-):\widehat{\mathbf{L}_\infty}\to\widehat{\mathbf{L}_\infty}$ by the corresponding mapping space functor $\underline{\mathbf{SSet}}(X,-):\mathbf{SSet}\to\mathbf{SSet}$, and we recover the usual constructions of totalization and homotopy limits in the context of simplicial sets, cf. \cite[Chapters X, XI]{BouKan} and \cite[Chapter VII]{Jardine}.
\end{remark}
\subsection{Descent of Deligne-Getzler $\infty$-groupoids}\label{subs:descent} In this subsection, we prove the compatibility, up to homotopy, between $\Del_\infty(-)$ and the totalization and homotopy limit functors introduced before.

We begin by showing the compatibility between $\Del_\infty(-)$ and $\Tot(-)$. As a preliminary result in this direction, we observe that given either a cosimplicial or a semicosimplicial complete $L_\infty$ algebra $L_\bullet$, the two simplicial sets $\Del_\infty(\Tot(L_\bullet))$ and $\Tot(\Del_\infty(L_\bullet))$ have the same set of vertices.

\begin{proposition}\label{prop:TotvsMC} There are natural isomorphism $\MC(\Tot(-))\xrightarrow{\cong}\Tot(\Del_\infty(-))_0$ of functors $\widehat{\mathbf{L}_\infty}^{\Delta}\rh\mathbf{Set}$ and $\widehat{\mathbf{L}_\infty}^{\scs}\rh\mathbf{Set}$.\end{proposition}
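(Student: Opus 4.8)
The plan is to exhibit the desired isomorphism by unwinding both sides as limits and using that the Maurer--Cartan functor commutes with limits. The starting point is the observation, recorded in Definition \ref{def:cos} (resp. Definition \ref{def:scs}), that $\Tot(L_\bullet)$ is by construction the limit over the category $\sM$ (resp. $\underrightarrow{\sM}$) of the functor $\{\underline{i}\rh\underline{j}\}\mapsto C^*(\Delta_i;L_j)$. First I would apply the functor $\MC(-):\widehat{\mathbf{L}_\infty}\rh\mathbf{Set}$ to $\Tot(L_\bullet)$. Since $\MC(-)$ commutes with small limits (this is used already in the proof of Lemma \ref{prop:MC(C*(-;-))} and in Lemma \ref{lem:Delvslimits}), and since limits in $\widehat{\mathbf{L}_\infty}$ over $\sM$ are computed as stated, we obtain
\[ \MC(\Tot(L_\bullet)) = \ilim_{\{\underline{i}\rh\underline{j}\}\in\sM}\MC(C^*(\Delta_i;L_j)). \]

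Next I would identify the right-hand side with the set of vertices of $\Tot(\Del_\infty(L_\bullet))$. By Definition \ref{def:higher deligne}, $\MC(C^*(\Delta_i;L_j)) = \Del_\infty(L_j)_i$ is precisely the set of $i$-simplices of the simplicial set $\Del_\infty(L_j)$. On the other hand, the set of vertices $\Tot(\Del_\infty(L_\bullet))_0$ of the totalization of the (co)simplicial simplicial set $\Del_\infty(L_\bullet)$ is, by the standard description of totalization reviewed in this subsection, exactly the limit over $\sM$ (resp. $\underrightarrow{\sM}$) of the sets $\underline{\mathbf{SSet}}(\Delta_i,\Del_\infty(L_j))_0 = \mathbf{SSet}(\Delta_i,\Del_\infty(L_j)) = \Del_\infty(L_j)_i$. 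Thus both sides are the limit of the \emph{same} diagram of sets indexed by $\sM$, and the identification is an equality of limits once one checks that the structure maps agree.

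The remaining point, and the only place requiring genuine care, is verifying that the cosimplicial structure maps match under this identification: that the transition maps in the $\sM$-diagram computing $\MC(\Tot(L_\bullet))$ coincide with those computing $\Tot(\Del_\infty(L_\bullet))_0$. A morphism $\{\underline{i}\rh\underline{j}\}\rh\{\underline{i'}\rh\underline{j'}\}$ in $\sM$ induces, on the algebra side, the composite push-forward and pull-back $C^*(\Delta_{i'};L_{j'})\rh C^*(\Delta_i;L_j)$ built from the cofaces/codegeneracies $\partial^\bullet_\ast, s^\bullet_\ast$ of $L_\bullet$ and the pull-backs $\delta^*_\bullet,\sigma^*_\bullet$ along $\Delta_\bullet$; on the simplicial side it induces the corresponding map $\Del_\infty(L_{j'})_{i'}\rh\Del_\infty(L_j)_i$ coming from the (co)simplicial structure of $\Del_\infty(L_\bullet)$ and the simplicial structure of each $\Del_\infty(L_j)$. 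That these agree follows from functoriality of $C^*(-;-)$ and of $\MC(-)$: applying $\MC$ to the push-forward by a coface $\partial^j$ of $L_\bullet$ yields exactly the map $\Del_\infty(\partial^j)$ on vertices, while applying $\MC$ to the pull-back $\delta^*_j$ yields the $j$-th face operator of the simplicial set $\Del_\infty(L_j)$, by Lemma \ref{prop:MC(C*(-;-))}. I expect this compatibility check to be the main obstacle, though it is ultimately routine; the semicosimplicial case is handled identically, simply restricting attention from $\sM$ to $\underrightarrow{\sM}$ and dropping the codegeneracy relations. Finally, naturality in $L_\bullet$ is immediate since every map used is natural, which completes the proof.
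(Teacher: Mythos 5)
Your argument is correct and is essentially identical to the paper's own proof: both identify $\Tot(L_\bullet)$ as the limit over $\sM$ (resp.\ $\underrightarrow{\sM}$) of the diagram $\{\underline{i}\rh\underline{j}\}\mapsto C^*(\Delta_i;L_j)$, commute $\MC(-)$ past this limit, and recognize $\MC(C^*(\Delta_i;L_j))=\mathbf{SSet}(\Delta_i,\Del_\infty(L_j))$ so that the resulting limit is exactly $\Tot(\Del_\infty(L_\bullet))_0$. The compatibility of structure maps that you flag as the main remaining point is indeed routine (and is left implicit in the paper), so nothing is missing.
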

\begin{proof} To fix the ideas, we consider the cosimplicial case. Since $\MC(-)$ commutes with small limits
	\[\MC(\Tot(L_\bullet)) = \ilim_{\sM}\,\MC(C^*(\Delta_i;L_j)) = \ilim_{\sM}\,\mathbf{SSet}(\Delta_i,\Del_\infty(L_j)) =
	\Tot(\Del_\infty(L_\bullet))_0.\]
The semicosimplicial case is proved in the same way, replacing the category $\sM$ by the one $\underrightarrow{\sM}$ in the above chain of isomorphisms.
\end{proof}

\begin{theorem}\label{th:descent} There are natural weak equivalences $\Del_\infty(\Tot(-))\xrightarrow{\sim}\Tot(\Del_\infty(-))$ of functors $\widehat{\mathbf{L}_\infty}^{\Delta}\rh\mathbf{SSet}$ and $\widehat{\mathbf{L}_\infty}^{\scs}\rh\mathbf{SSet}$.\end{theorem}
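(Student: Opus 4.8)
The plan is to construct the natural transformation by hand and then show it is a weak equivalence by inducting on the tower of partial totalizations, using Theorem \ref{th:Delignevsmappingspaces} as the input at each simplicial level. For the construction, recall that $\Tot(L_\bullet)$ is the limit over $\sM$ (resp.\ $\underrightarrow{\sM}$) of the diagram $\{\underline{i}\to\underline{j}\}\mapsto C^*(\Delta_i;L_j)$. Composing the projections of $\Del_\infty(\Tot(L_\bullet))$ onto each $\Del_\infty(C^*(\Delta_i;L_j))$ with the natural weak equivalences of Theorem \ref{th:Delignevsmappingspaces} produces a cone over the diagram $\{\underline{i}\to\underline{j}\}\mapsto\underline{\mathbf{SSet}}(\Delta_i,\Del_\infty(L_j))$, hence a map to its limit $\Tot(\Del_\infty(L_\bullet))$. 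Naturality in $L_\bullet$ is inherited from Theorem \ref{th:Delignevsmappingspaces}, and on vertices this map recovers the isomorphism of Proposition \ref{prop:TotvsMC}.

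To prove it is a weak equivalence I would pass to the towers $\Tot(L_\bullet)=\ilim_k\Tot_k(L_\bullet)$ and $\Tot(\Del_\infty(L_\bullet))=\ilim_k\Tot_k(\Del_\infty(L_\bullet))$. Since $\Del_\infty(-)$ commutes with small limits by Lemma \ref{lem:Delvslimits}, we get $\Del_\infty(\Tot(L_\bullet))=\ilim_k\Del_\infty(\Tot_k(L_\bullet))$, so our transformation is the inverse limit of maps $\Del_\infty(\Tot_k(L_\bullet))\to\Tot_k(\Del_\infty(L_\bullet))$. I claim that each of these maps is a weak equivalence and that both towers are towers of Kan fibrations between Kan complexes; granting this, the conclusion follows because the limit of a weak equivalence between fibrant towers is a weak equivalence, cf.\ \cite[Proposition 15.10.12 (2)]{HirMC}.

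For the inductive claim, the base case $k=0$ is immediate, since $\Tot_0(L_\bullet)=L_0$ and $\Tot_0(\Del_\infty(L_\bullet))=\Del_\infty(L_0)$, and the map reduces to the identity. For the inductive step I would apply $\Del_\infty(-)$ to the cartesian square of Remark \ref{rem:cos} (resp.\ Remark \ref{rem:scs}). As $\Del_\infty(-)$ preserves pullbacks and, by Corollary \ref{th:surjective=Kanfibration}, sends the surjective right-hand vertical $C^*(\Delta_k;L_k)\to N_{k-1}$ to a Kan fibration, we obtain a homotopy-cartesian square with corners $\Del_\infty(\Tot_k(L_\bullet))$, $\Del_\infty(C^*(\Delta_k;L_k))$, $\Del_\infty(\Tot_{k-1}(L_\bullet))$ and $\Del_\infty(N_{k-1})$; its left vertical, being a base change of a Kan fibration, is again a Kan fibration, which gives fibrancy of the tower on the Deligne side. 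The analogous square defining $\Tot_k$ of the cosimplicial simplicial set $\Del_\infty(L_\bullet)$ is homotopy-cartesian with right vertical a Kan fibration by \cite[Proposition 9.3.1 (1)]{HirMC}, yielding fibrancy of the simplicial tower. Theorem \ref{th:Delignevsmappingspaces} supplies weak equivalences on the corners $\Del_\infty(C^*(\Delta_k;L_k))$ and $\Del_\infty(N_{k-1})$ (for the latter, using that both $C^*(-;-)$ and $\underline{\mathbf{SSet}}(-,-)$ commute with the limits over $\de\Delta_k$ and with the fibre product defining $N_{k-1}$), while the inductive hypothesis handles $\Del_\infty(\Tot_{k-1}(L_\bullet))$; the gluing lemma for homotopy pullbacks in the right proper category $\mathbf{SSet}$ then forces the map on the fourth corner $\Del_\infty(\Tot_k(L_\bullet))\to\Tot_k(\Del_\infty(L_\bullet))$ to be a weak equivalence.

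The main obstacle I anticipate is the bookkeeping in the inductive step: identifying $\Del_\infty(N_{k-1})$ with the simplicial matching object and checking that both comparison squares are genuinely homotopy-cartesian, so that the gluing lemma applies. This rests entirely on the compatibility of $\Del_\infty(C^*(-;-))$ with $\underline{\mathbf{SSet}}(-,\Del_\infty(-))$ from Theorem \ref{th:Delignevsmappingspaces}, applied to $\de\Delta_k$ and to the defining fibre product, together with the Kan fibration criterion of Corollary \ref{th:surjective=Kanfibration}. The semicosimplicial case is entirely parallel but simpler, the matching object $N_{k-1}$ being replaced by $C^*(\de\Delta_k;L_k)$ as in Remark \ref{rem:scs}.
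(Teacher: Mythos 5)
Your proposal is correct and follows essentially the same route as the paper: the map is built from Theorem \ref{th:Delignevsmappingspaces} via the universal property of $\Tot$, the weak equivalence is proved by induction on the tower of partial totalizations using the (homotopy-)cartesian squares of Remarks \ref{rem:cos} and \ref{rem:scs} together with the Kan fibration criterion of Corollary \ref{th:surjective=Kanfibration}, and one concludes by taking the limit of a weak equivalence of fibrant towers; your appeal to the gluing lemma in the right proper category $\mathbf{SSet}$ is just a repackaging of the paper's use of homotopy fiber products. The only step you leave implicit — that $\Del_\infty(N_{k-1})\to X_{k-1}$ is itself a weak equivalence, which requires one more homotopy-pullback comparison (and, for the Kan fibration on the mapping-space side, the Reedy fibrancy of the cosimplicial simplicial set $\Del_\infty(L_\bullet)$ rather than \cite[Proposition 9.3.1 (1)]{HirMC} alone) — is exactly what the paper spells out, and you correctly identify it as the remaining bookkeeping.
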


\begin{proof} In both the cosimplicial and the semicosimplicial cases, we have morphisms \[ \Del_\infty(\Tot(L_\bullet))\rh\Del_\infty(C^*(\Delta_i;L_i))\rh\underline{\mathbf{SSet}}(\Delta_i,\Del_\infty(L_i)),\quad i\geq0,\] 
given by Theorem \ref{th:Delignevsmappingspaces}. For each arrow $\{i\rh j\}$ in $\Delta$ (resp.: $\underrightarrow{\Delta}$) in the induced diagram
\[\xymatrix{\Del_\infty(\Tot(L_\bullet))\ar[d]\ar[r]&\Del_\infty(C^*(\Delta_j;L_j))\ar[r]\ar[d]&\underline{\mathbf{SSet}}(\Delta_j,\Del_\infty(L_j))\ar[dd]\\
		\Del_\infty(C^*(\Delta_i;L_i))\ar[d]\ar[r]&\Del_\infty(C^*(\Delta_i;L_j))\ar[rd]\\
		\underline{\mathbf{SSet}}(\Delta_i,\Del_\infty(L_i))\ar[rr]&&\underline{\mathbf{SSet}}(\Delta_i,\Del_\infty(L_j))}\]
	the inner squares are commutative, thus the outer square is commutative as well, and there is induced a unique natural $\Del_\infty(\Tot(L_\bullet))\rh\Tot(\Del_\infty(L_\bullet))$ making the diagram
	\[\xymatrix{\Del_\infty(\Tot(L_\bullet))\ar[d]\ar[r]&\Del_\infty(C^*(\Delta_i;L_i))\ar[d]\\
		\Tot(\Del_\infty(L_\bullet))\ar[r]&\underline{\mathbf{SSet}}(\Delta_i,\Del_\infty(L_i))}\]
	commutative for all $i\geq0$. For the same reason, for each $k\geq0$ it is defined a natural transformation $\Del_\infty(\Tot_{k}(-))\rh\Tot_{ k}(\Del_\infty(-))$.
	
	To prove that $\Del_\infty(\Tot(L_\bullet))\xrightarrow{}\Tot(\Del_\infty(L_\bullet))$ is a weak equivalence, we shall consider the semicosimplicial case first. We will prove inductively that $\Del_\infty(\Tot_{ k}(L_\bullet))\xrightarrow{}\Tot_{ k}(\Del_\infty(L_\bullet))$ is a weak equivalence, the case $k=0$ being obvious. To continue the induction we look at the commutative diagram (recall Remark \ref{rem:scs})
	\[ \xymatrix{\Del_\infty(\Tot_{ k-1}(L_\bullet))\ar[r]\ar[d]&\Del_\infty(C^*(\de\Delta_k;L_k))\ar[d]&\Del_\infty(C^*(\Delta_k;L_k))\ar[d]\ar[l]\\
		\Tot_{ k-1}(\Del_\infty(L_\bullet))\ar[r]&\underline{\mathbf{SSet}}(\de\Delta_k,\Del_\infty(L_k))&\underline{\mathbf{SSet}}(\Delta_k,\Del_\infty(L_k))\ar[l]} \]
	where all spaces are Kan complexes. As the left pointing arrows are Kan fibrations, the top one by Theorem \ref{th:surjective=Kanfibration} and the bottom one by \cite[Proposition 9.3.1 (1)]{HirMC}, we see that the fiber products of the rows
	\[\Del_\infty(\Tot_{ k}(L_\bullet))=\Del_\infty(\Tot_{ k-1}(L_\bullet))\times_{\Del_\infty(C^*(\de\Delta_k;L_k))}\Del_\infty(C^*(\de\Delta_k;L_k))\]
	\[\Tot_{ k}(\Del_\infty(L_\bullet))=\Tot_{ k-1}(\Del_\infty(L_\bullet))\times_{\underline{\mathbf{SSet}}(\de\Delta_k,\Del_\infty(L_k))}\underline{\mathbf{SSet}}(\Delta_k,\Del_\infty(L_k))\]
	are also homotopy fiber products (cf. \cite[Remark A.2.4.5]{LurieHTT}). As the vertical arrows in the diagram are weak equivalences, by the inductive hypothesis and Theorem \ref{th:Delignevsmappingspaces}, this implies that also $\Del_\infty(\Tot_{ k}(L_\bullet))\rh\Tot_{   k}(\Del_\infty(L_\bullet))$ is a weak equivalence, and concludes the inductive step. 
	
Finally, for all $k\geq1$ the projections \[ \Del_\infty(\Tot_{ k}(L_\bullet))\rh\Del_\infty(\Tot_{ k-1}(L_\bullet))\qquad\mbox{and}\qquad\Tot_{ k}(\Del_\infty(L_\bullet))\rh\Tot_{ k-1}(\Del_\infty(L_\bullet))\] 
are the pull-backs of Kan fibrations, hence Kan fibrations themselves. It follows that \[\ilim_k \Del_\infty(\Tot_k(L_\bullet))=\Del_\infty(\Tot(L_\bullet))\to\Tot(\Del_\infty(L_\bullet))=\ilim_k\Tot_k(\Del_\infty(L_\bullet))\] 
is the limit of a weak equivalence between fibrant towers of simplicial sets, cf. \cite[Proposition 15.10.12 (2)]{HirMC}, hence a weak equivalence.

Next we consider the cosimplcial case. We shall follow the same inductive argument as before. Assume we have shown that   $\Del_\infty(\Tot_{k-1}(L_\bullet))\to\Tot_{k-1}(\Del_\infty(L_\bullet))$ is a weak equivalence, the case $k-1=0$ being obvious. The simplicial sets $\Del_\infty(\Tot_k(L_\bullet))$ and $\Tot_k(\Del_\infty(L_\bullet))$ fit into cartesian diagrams (cf. Remark \ref{rem:cos})
\[ \xymatrix{\Del_\infty(\Tot_k(L_\bullet))\ar[r]\ar[d]& \Del_\infty(C^*(\Delta_k;L_k))\ar[d]\\\Del_\infty(\Tot_{k-1}(L_\bullet))\ar[r]&
	\Del_\infty(N_{k-1})}\qquad \xymatrix{\Tot_k(\Del_\infty(L_\bullet))\ar[r]\ar[d]& \underline{\mathbf{SSet}}(\Delta_k,\Del_\infty(L_k))\ar[d]\\\Tot_{k-1}(\Del_{\infty}(L_\bullet))\ar[r]&
	X_{k-1}}\]
where 
\[ \Del_\infty(N_{k-1}):= \Del_\infty(C^*(\de\Delta_k;L_k))\times_{\Del_\infty(C^*(\de\Delta_k;M_{k-1}(L_\bullet)))}\Del_\infty(C^*(\Delta_k;M_{k-1}(L_\bullet)))  \]
and \[ X_{k-1}:= \underline{\mathbf{SSet}}(\de\Delta_k,\Del_\infty(L_k))\times_{\underline{\mathbf{SSet}}(\de\Delta_k,M_{k-1}(\Del_\infty(L_\bullet)))}\underline{\mathbf{SSet}}(\Delta_k,M_{k-1}(\Del_\infty(L_\bullet))). \]
Furthermore, the vertical arrows in the above cartesian diagrams are Kan fibrations. For the left hand side diagram, this follows from Remark \ref{rem:cos} and Theorem \ref{th:surjective=Kanfibration}. For the right hand side one, it follows from the fact that the cosimplicial simplicial set $\Del_\infty(L_\bullet)$ is Reedy fibrant (see \cite[Ch. X, \S 4]{BouKan}): this can be seen using again Theorem \ref{th:surjective=Kanfibration}, together with the observation that $\Del_\infty(-)$ commutes with the matching space functors $M_{k-1}(-)$,  according to Lemma \ref{lem:Delvslimits}.

The compatibility between $\Del_\infty(-)$ and $M_{k-1}(-)$ also implies the following commutative diagram, induced by Theorem \ref{th:Delignevsmappingspaces}
\[ \xymatrix{\Del_\infty(C^*(\de\Delta_k;L_k))\ar[r]\ar[d]&\Del_\infty(C^*(\de\Delta_k;M_{k-1}(L_\bullet)))\ar[d]&\Del_\infty(C^*(\Delta_k;M_{k-1}(L_\bullet)))\ar[d]\ar[l]\\
	\underline{\mathbf{SSet}}(\de\Delta_k,\Del_\infty(L_k))\ar[r]&\underline{\mathbf{SSet}}(\de\Delta_k,M_{k-1}(\Del_\infty(L_\bullet)))&\underline{\mathbf{SSet}}(\Delta_k,M_{k-1}(\Del_\infty(L_\bullet)))\ar[l]} \]
where all spaces are Kan complexes, and the left pointing arrows are Kan fibrations: hence the fiber products $\Del_\infty(N_{k-1})$ and $X_{k-1}$ are also homotopy fiber products, by \cite[Remark A.2.4.5]{LurieHTT}. Since the vertical arrows are weak equivalences, so is the induced $\Del_\infty(N_{k-1})\to X_{k-1}$. Now the rest of the proof proceeds exactly as in the semicosimplicial case.

	\end{proof}

We notice that the previous theorem, combined with the previous proposition, imply the following corollary (which slightly generalize a result from \cite{FMMscdgla}), where we denote by $\pi_{\leq1}(-):\mathbf{Kan}\rh\mathbf{Grpd}$ the functor sending a Kan complex to its fundamental groupoid.

\begin{corollary}\label{cor:fiomanmar} There are natural isomorphisms $\pi_{\leq1}(\Del_\infty(\Tot(-)))\xrightarrow{\cong}\pi_{\leq1}(\Tot(\Del_\infty(-)))$ of functors $\widehat{\mathbf{L_\infty}}^{\Delta}\rh\mathbf{Grpd}$ and  $\widehat{\mathbf{L_\infty}}^{\underrightarrow{\Delta}}\rh\mathbf{Grpd}$.\end{corollary}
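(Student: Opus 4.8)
The plan is to deduce Corollary \ref{cor:fiomanmar} directly from Theorem \ref{th:descent}, using the elementary observation that the fundamental groupoid functor $\pi_{\leq1}(-):\mathbf{Kan}\rh\mathbf{Grpd}$ sends weak equivalences of Kan complexes to isomorphisms (indeed, to equivalences) of groupoids. Theorem \ref{th:descent} already provides, in both the cosimplicial and semicosimplicial settings, a natural weak equivalence $\Del_\infty(\Tot(L_\bullet))\xrightarrow{\sim}\Tot(\Del_\infty(L_\bullet))$ of simplicial sets; by Corollary \ref{th:surjective=Kanfibration} the source is a Kan complex, and the target is a Kan complex as well, being the totalization of a Reedy fibrant cosimplicial (resp.\ semicosimplicial) diagram of Kan complexes. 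So the strategy is simply to apply $\pi_{\leq1}$ to this natural weak equivalence.

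First I would recall that on fibrant objects (Kan complexes) a weak equivalence in the standard model structure on $\mathbf{SSet}$ is precisely a simplicial homotopy equivalence, and that $\pi_{\leq1}(-)$ is homotopy invariant: a simplicial homotopy equivalence induces an equivalence of fundamental groupoids, and in fact $\pi_{\leq1}$ of a weak equivalence between Kan complexes is an equivalence of categories. To upgrade ``equivalence'' to the claimed ``isomorphism'' I would note that $\pi_{\leq1}(-)$ depends only on the $1$-truncation $\op{cosk}_1$, or equivalently on the $0$- and $1$-simplices together with the face and degeneracy relations, and that for the natural transformation at hand the induced map on objects (vertices) is already a bijection: by Proposition \ref{prop:TotvsMC} the two simplicial sets $\Del_\infty(\Tot(L_\bullet))$ and $\Tot(\Del_\infty(L_\bullet))$ have literally the same set of vertices, and the natural map of Theorem \ref{th:descent} restricts on $0$-simplices to this identification $\MC(\Tot(L_\bullet))\xrightarrow{\cong}\Tot(\Del_\infty(L_\bullet))_0$. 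A weak equivalence of Kan complexes that is bijective on vertices induces a functor of fundamental groupoids which is bijective on objects and fully faithful, hence an isomorphism of groupoids, not merely an equivalence.

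Next I would verify naturality: since the weak equivalence of Theorem \ref{th:descent} is a natural transformation of functors $\widehat{\mathbf{L}_\infty}^{\Delta}\rh\mathbf{SSet}$ (resp.\ $\widehat{\mathbf{L}_\infty}^{\scs}\rh\mathbf{SSet}$), and $\pi_{\leq1}(-)$ is a functor $\mathbf{Kan}\rh\mathbf{Grpd}$, the composite $\pi_{\leq1}\circ(\text{natural transformation})$ is automatically a natural transformation of functors into $\mathbf{Grpd}$, whose components we have just argued are isomorphisms. Thus I obtain the desired natural isomorphisms $\pi_{\leq1}(\Del_\infty(\Tot(-)))\xrightarrow{\cong}\pi_{\leq1}(\Tot(\Del_\infty(-)))$ in both indexing categories.

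The main (and only) delicate point is the passage from ``equivalence of groupoids'' to ``isomorphism of groupoids'': a bare application of homotopy invariance of $\pi_{\leq1}$ yields equivalences, and to pin down honest isomorphisms I must invoke the bijectivity on $0$-simplices from Proposition \ref{prop:TotvsMC} together with the fact that a fully faithful functor bijective on objects is an isomorphism. I expect this bookkeeping to be routine once the identification of vertices is in place; the substance of the corollary is entirely carried by Theorem \ref{th:descent}, and the present statement is a formal homotopy-invariance consequence of it.
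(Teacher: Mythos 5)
Your argument is correct and is exactly the paper's proof: the paper deduces the corollary from Theorem \ref{th:descent} combined with Proposition \ref{prop:TotvsMC}, observing precisely that an equivalence of groupoids which is a bijection on objects is an isomorphism. The only difference is that you spell out the routine bookkeeping (homotopy invariance of $\pi_{\leq1}$, naturality, full faithfulness plus bijectivity on objects) which the paper leaves implicit.
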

\begin{proof} Since an equivalence of groupoids which is an isomorphism on the set of objects has to be an isomorphism.\end{proof}

Finally, the previous result immediately implies the compatibility, up to homotopy, between $\Del_\infty(-)$ and homotopy limits.

\begin{theorem}\label{th:hldescent} Given a small category $\mathbf{S}$, there is a natural weak equivalence
\[ \Del_\infty(\underleftarrow{\operatorname{holim}}(-))\xrightarrow{\sim} \underleftarrow{\operatorname{holim}}(\Del_\infty\circ - ) \]
of functors $\widehat{\mathbf{L}_\infty}^{\mathbf{S}}\to\mathbf{SSet}$.
\end{theorem}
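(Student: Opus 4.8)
The plan is to reduce the statement about homotopy limits to the already-established compatibility between $\Del_\infty(-)$ and totalization, which is Theorem \ref{th:descent}. By definition, the homotopy limit functor $\underleftarrow{\operatorname{holim}}(-)$ factors as the composition of the cosimplicial replacement functor $\Pi(-):\widehat{\mathbf{L}_\infty}^{\mathbf{S}}\to\widehat{\mathbf{L}_\infty}^{\Delta}$ followed by totalization $\Tot(-):\widehat{\mathbf{L}_\infty}^{\Delta}\to\widehat{\mathbf{L}_\infty}$. On the simplicial side, the standard homotopy limit of simplicial sets is constructed in exactly the same way: cosimplicial replacement followed by totalization, where the mapping space functor $\underline{\mathbf{SSet}}(X,-)$ plays the role that $C^*(X;-)$ plays on the $L_\infty$ side (cf. the Remark following Definition \ref{def:replacement}). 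Thus, once we know that $\Del_\infty(-)$ commutes with cosimplicial replacement, we can simply compose with Theorem \ref{th:descent}.

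First I would verify that $\Del_\infty(-)$ is compatible with the cosimplicial replacement construction, in the sense that there is a natural isomorphism $\Del_\infty(\Pi(F)_\bullet)\cong \Pi(\Del_\infty\circ F)_\bullet$ of cosimplicial simplicial sets, for any diagram $F:\mathbf{S}\to\widehat{\mathbf{L}_\infty}$. This is essentially a formal consequence of Lemma \ref{lem:Delvslimits}, which asserts that $\Del_\infty(-)$ commutes with small limits. Indeed, the complete $L_\infty$ algebra of $n$-simplices of the cosimplicial replacement is the product $\Pi(F)_n=\prod_{i_0\to\cdots\to i_n}F(i_n)$ over $n$-tuples of composable arrows in $\mathbf{S}$; applying $\Del_\infty(-)$ and using that it sends this (small) product to the corresponding product of simplicial sets yields $\Del_\infty(\Pi(F)_n)\cong\prod_{i_0\to\cdots\to i_n}\Del_\infty(F(i_n))=\Pi(\Del_\infty\circ F)_n$. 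One then checks that these isomorphisms are compatible with the cofaces and codegeneracies described in Definition \ref{def:replacement}, which is routine since those structure maps are built entirely from projections onto factors and from applying $F$ (resp.\ $\Del_\infty\circ F$) to arrows of $\mathbf{S}$, and all of these commute with the limit-preserving functor $\Del_\infty(-)$.

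Granting this compatibility, the proof assembles as follows. For a diagram $F:\mathbf{S}\to\widehat{\mathbf{L}_\infty}$ we have
\[ \Del_\infty(\underleftarrow{\operatorname{holim}}(F))=\Del_\infty(\Tot(\Pi(F)_\bullet))\xrightarrow{\sim}\Tot(\Del_\infty(\Pi(F)_\bullet))\cong\Tot(\Pi(\Del_\infty\circ F)_\bullet)=\underleftarrow{\operatorname{holim}}(\Del_\infty\circ F), \]
where the weak equivalence in the middle is the cosimplicial case of Theorem \ref{th:descent} applied to the cosimplicial complete $L_\infty$ algebra $\Pi(F)_\bullet$, and the subsequent isomorphism is the compatibility just established (the totalization functor, being defined by the same universal property on both sides, transports the cosimplicial isomorphism to an isomorphism of totalizations). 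Naturality in the diagram $F$ follows from naturality of each of the three constituent maps. The main obstacle, such as it is, lies not in any single deep estimate but in carefully matching the combinatorial bookkeeping of the structure maps in Definition \ref{def:replacement} on the two sides of the isomorphism $\Del_\infty(\Pi(F)_\bullet)\cong\Pi(\Del_\infty\circ F)_\bullet$; the genuinely substantial homotopical content—that totalization of the relevant fibrant tower is well behaved—has already been absorbed into Theorem \ref{th:descent}, so the present theorem is a genuine corollary rather than a new argument.
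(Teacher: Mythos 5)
Your proposal is correct and follows essentially the same route as the paper: the paper also identifies $\Del_\infty(\Pi(F))=\Pi(\Del_\infty\circ F)$ using the fact that $\Del_\infty(-)$ commutes with products, and then applies Theorem \ref{th:descent} to the cosimplicial replacement. No gaps.
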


\begin{proof} Let $F:\mathbf{S}\to \widehat{\mathbf{L}_\infty}$ be an $\mathbf{S}$-diagram of complete $L_\infty$ algebras, together with the induced $\mathbf{S}$-diagram  $\Del_\infty\circ F:\mathbf{S}\to \mathbf{SSet}$ of simplicial sets. Since $\Del_\infty(-)$ commutes with products, it is straightforward to check that there is a natural identification $\Del_\infty(\Pi(F))=\Pi(\Del_\infty\circ F)$ of cosimplicial simplicial sets, where $\Pi(-)$ is the cosimplicial replacement functor from the previous subsection. By the previous theorem, there is a natural weak equivalence
\[\Del_\infty(\underleftarrow{\operatorname{holim}}(F))= \Del_\infty(\Tot(\Pi(F)))\xrightarrow{\sim} \Tot(\Pi(\Del_\infty\circ F))=\underleftarrow{\operatorname{holim}}(\Del_\infty\circ F).\]

\end{proof}


\begin{thebibliography}{99}
	
	\bibitem{BM} R. Bandiera, M. Manetti, \emph{On coisotropic deformations of holomorphic submanifolds}, J. Math. Sci. Univ. Tokyo \textbf{22} (2015), Kodaira centennial issue, 1-37; \texttt{arXiv:1301.6000v2 [math.AG]}.
	
	\bibitem{tesi} R. Bandiera, \emph{Higher Deligne groupoids, derived brackets and deformation problems in holomorphic Poisson geometry}, PhD Thesis, University of Rome \emph{La Sapienza}, January 2015; available at \texttt{http://www1.mat.uniroma1.it/ricerca/dottorato/TESI/ARCHIVIO/bandieraruggero.pdf}.
	
	\bibitem{BerglundHPT} A. Berglund, \emph{Homological perturbation theory for algebras over operads}; Algebraic \& Geometric Topology \textbf{14} (2014), 2511-2548; \texttt{arXiv:0909.3485v2 [math.AT]}.
	
	
	\bibitem{BerglundLie} A. Berglund, \emph{Rational homotopy theory of mapping spaces via Lie theory for nilpotent L-infinity algebras},  Homology, Homotopy \& Applications \textbf{17} (2015), no. 2, 343-369; \texttt{arXiv:1110.6145 [math.AT]}.
	
	\bibitem{BouKan} A. K. Bousfield, D. M. Kan, \emph{Homotopy limits, completions, and localizations}, Lecture Notes in Mathematics \textbf{304}, Springer-Verlag, Berlin-New York, 1972.
	
	\bibitem{gerbes} P. Bressler, A. Gorokhovsky, R. Nest,  B. Tsygan, \emph{Deformation quantization of gerbes}, Adv. in Math.
	\textbf{214} (2007), no. 1, 230-266; \texttt{arXiv:math/0512136v2 [math.QA]}.
	
	\bibitem{BrSz} E. H. Brown Jr, R. H. Szczarba, \emph{Continuous cohomology and real homotopy type}, Trans. of the Amer. Math. Soc. \textbf{311}, no. 1 (1989).
	
	\bibitem{BFMT1} U. Buijs, Y. F\'elix, A. Murillo, D. Tanr\'e, \emph{Lie models of simplicial sets and representability of the Quillen functor}; \texttt{arXiv:1508.01442}
	
	\bibitem{BFMT2} U. Buijs, Y. F\'elix, A. Murillo, D. Tanr\'e, \emph{The infinity Quillen functor, Maurer-Cartan elements and DGL realizations}; \texttt{arXiv:1702.04397}.
	
\bibitem{Dupont}  J. L. Dupont, \emph{Curvature and characteristic classes}, Lecture Notes in Mathematics \textbf{640}, Springer-Verlag (1978).
	
	\bibitem{RHT} Y. F\'elix, S. Halperin, J-C. Thomas, \emph{Rational Homotopy Theory}, Graduate
	texts in Mathematics \textbf{205}, Springer, 2001.
	
	\bibitem{FMcone} D. Fiorenza, M. Manetti,
	\emph{$L_{\infty}$ structures on mapping cones}, Algebra \& Number Theory \textbf{1} (2007), 301-330;
	\texttt{arXiv:0601312 [math.QA]}.
	
	\bibitem{FMMscdgla} D. Fiorenza, M. Manetti, E. Martinengo,
	\emph{Semicosimplicial DGLAs in deformation theory}, Communications in Algebra \textbf{40} (2012), 2243-2260; \texttt{ arXiv:0803.0399 [math.QA]}.
	
	\bibitem{FioMarIac} D. Fiorenza, E. Martinengo, D. Iacono, \emph{Differential graded Lie algebras controlling infinitesimal deformations of coherent sheaves}, J. Eur. Math. Soc. \textbf{14}, no. 2 (2012), 521-540. \texttt{arXiv:0904.1301 [math.QA]}
	
	\bibitem{GetzlerDel} E. Getzler, \emph{A Darboux theorem for Hamiltonian operators in the formal calculus of variations}; Duke Math. J. \textbf{111} (2002), 535–560; \texttt{ arXiv:math/0002164 [math.DG]}.
	
	\bibitem{GetzlerLie} E.~Getzler, \emph{Lie theory for nilpotent $L_{\infty}$-algebras}, Ann. of Math. \textbf{170}, no. 1 (2009), 271-301; \texttt{arXiv:math/0404003v4}.

	\bibitem{Getzlerpert} E.~Getzler, \emph{Maurer-Cartan elements and homotopical perturbation theory}; \texttt{arXiv:1802.06736 [math.KT]}.
	
	\bibitem{Jardine} P.G. Goerss, J.F. Jardine, \emph{Simplicial homotopy theory}, Progress in Mathematics \textbf{174}, Birkh\"auser Verlag, Basel, 1999.
	
	\bibitem{GoMil1} W.M. Goldman, J.J. Millson,
	\emph{The deformation theory of
		representations of fundamental groups of compact k\"{a}hler manifolds},
	Publ. Math. I.H.E.S. \textbf{67} (1988) 43-96.
	
	\bibitem{Hinichdescent} V.~Hinich, \emph{Descent of Deligne groupoids}, Internat. Math. Res. Notices ,  no. 5 (1997), 223-239; \texttt{arXiv:alg-geom/9606010v3}.
	
	\bibitem{hinichdgC} V. Hinich, \emph{DG coalgebras as formal stack}, J. Pure Appl. Algebra \textbf{162} (2001), 209-250; \texttt{arXiv:math/9812034v1 [math.AG]}.

	
	\bibitem{HirMC} P. S. Hirschhorn, \emph{Model categories and their localization}, Mathemathical Surveys and Monographs \textbf{99}, American Mathemathical Society, Providence, 2002.
	
	\bibitem{iacMan2} D. Iacono, M. Manetti, \emph{Semiregularity and obstructions of complete intersections}, Adv. in Math. \textbf{235} (2013), 92-125; \texttt{arXiv:1112.0
		425v4 [math.AG]}.
	
	
	\bibitem{LadaStasheff} T. Lada, J. Stasheff, \emph{Introduction to sh Lie algebras for physicists}, Int. J. Theor. Phys. \textbf{32} (1992), 1087-1104; \texttt{arXiv:hep-th/9209099}.
	
	\bibitem{LurieHTT} J. Lurie, \emph{Higher topos theory}, Annals of
	Mathemathical Studies \textbf{170}, Princeton University Press, Princeton, 2009.
	
	\bibitem{lurie2} J. Lurie, \emph{Formal moduli problems}, 2011; available at\newline \texttt{http://www.math.harvard.edu/lurie/papers/DAG-X.pdf}.
		
	\bibitem{May} J. P. May, \emph{Simplicial objects in algbraic topology}, University of Chicago Press, Chicago, 1967.
	
	
	\bibitem{EDF} M. Manetti, \emph{Extended deformation functors}, Int. Math. Res. Not. \textbf{14} (2002), 719-756; {\texttt{arXiv:math.AG/9910071}}.
		
	
	\bibitem{Nij-Ric} A. Nijenhuis, R. W. Richardson, \emph{Cohomology and deformations in graded Lie algebras}, Bull. Amer. Math. Soc. \textbf{72} (1966), 1-29.
	
	\bibitem{pridham} J. P. Pridham, \emph{Unifying derived deformation theories}, Adv. Math. \textbf{224}, no.3 (2010), 772-826; \texttt{arXiv:0705.0344v6}.
	
	\bibitem{Nicaud} D. Robert-Nicoud, \emph{Representing the Deligne-Hinich-Getzler $\infty$-groupoid}; \texttt{arXiv:1702.02529v2 [math.AT]}
	
	\bibitem{sullivan} D. Sullivan, \emph{Infinitesimal computations in topology}, Publications math\'ematiques de l'I.H.\'E.S. \textbf{47} (1977), 269-331.
	
	\bibitem{Weibel} C. A. Weibel, \emph{An introduction to homological algebra}, Cambridge Studies in Advanced Mathematics \textbf{38}, Cambridge University Press, Cambridge, 1994.
	
	\bibitem{yeku2} A. Yekutieli, \emph{Twisted deformation quantization of algebraic varieties}, Adv. in Math. \textbf{268} (2009); \texttt{ arXiv:0905.0488v5 [math.AG]}.
	
\end{thebibliography}
\end{document}